\documentclass{article}
\usepackage[a4paper,top=3.cm,bottom=4.cm,left=3.5cm,right=3.5cm]{geometry}
\usepackage{graphicx} 
\usepackage{authblk}

\usepackage{xcolor}
\usepackage{hyperref}

\usepackage{algorithm}
\usepackage{algorithmic}

\usepackage[labelfont=sc]{caption}
\captionsetup[figure]{labelsep=period, font={footnotesize,it}}
\captionsetup[table]{labelsep=period, font={footnotesize,it}}

\usepackage{titlesec}
\titleformat{\paragraph}[runin]{\normalfont\bfseries}{\theparagraph.}
{0.5em}{}[.] 
\titleformat{\section}{\Large\bfseries}{\thesection.}
{0.5em}{}[] 
\titleformat{\subsection}{\large\bfseries}{\thesubsection.}
{0.5em}{}[] 
\usepackage[titletoc,title]{appendix}

\usepackage{amsthm}
\newtheorem{theorem}{Theorem}[section]
\newtheorem{proposition}{Proposition}[section]
\newtheorem{lemma}{Lemma}[section]
\newtheorem{remark}{Remark}[section]
\newtheorem{corollary}{Corollary}[section]

\usepackage{amsmath, amssymb, amsfonts}
\usepackage{bm}
\usepackage{comment}
\usepackage{enumitem}
\usepackage{mathrsfs}
\usepackage{array}
\usepackage{multirow}
\usepackage{booktabs}
\usepackage{siunitx}
\usepackage{xcolor}
\definecolor{darkblue}{HTML}{00008B}
\definecolor{darkred}{HTML}{8b0000}
\sisetup{
text-series-to-math = true ,
propagate-math-font = true
}
\sisetup{table-number-alignment=center, exponent-product=\cdot}

\newcommand{\opnorm}[1]{{\left\vert\kern-0.25ex\left\vert\kern-0.25ex\left\vert #1
    \right\vert\kern-0.25ex\right\vert\kern-0.25ex\right\vert}}
\newcommand{\bu}{\mathbf{u}}
\newcommand{\bv}{\mathbf{v}}

\newcommand{\bz}{\mathbf{z}}
\newcommand{\bc}{\mathbf{c}}
\newcommand{\bq}{\mathbf{q}}
\newcommand{\bmu}{\boldsymbol{\mu}}
\newcommand{\bV}{\mathbf{V}}

\newcommand{\bW}{\mathbf{W}}

\newcommand{\act}{\rho}
\newcommand{\invact}{\rho^{-1}}

\newcommand{\lip}[1]{\textnormal{Lip}(#1)}
\newcommand{\ndim}{n_{0}}
\newcommand{\hypclass}{\mathcal{H}\mathcal{C}}
\newcommand{\orthparam}{\pi_{\textnormal{orth}}}

\newcommand{\bias}{\mathbf{b}}
\newcommand{\autoencoder}{\boldsymbol{\Psi}}
\newcommand{\biasE}{\mathbf{e}}
\newcommand{\biasD}{\mathbf{d}}
\newcommand{\bVE}{\mathbf{E}}
\newcommand{\bVD}{\mathbf{D}}


\newcommand{\rank}{\textnormal{rk}}
\newcommand{\cmatrix}{\boldsymbol{\Sigma}}

\renewenvironment{abstract}{%
    \small%
    \textsc{\abstractname.}%
}{%
    \par 
    \vspace{\baselineskip}
}
\newenvironment{keywords}{\small\textsc{Keywords.}}{\par}

\begin{document}

\title{\textsc{Deep Symmetric Autoencoders from the Eckart-Young-Schmidt Perspective}}
\author[$^\dagger$]{Simone Brivio}
\author[$^\dagger$]{Nicola Rares Franco}
\affil[$^\dagger$]{\normalsize MOX, Department of Mathematics, Politecnico di Milano, Milan, Italy}

\date{}
\maketitle
\vspace{-1.25cm}
\begin{center}
{\small \{\texttt{simone.brivio}, \texttt{nicolarares.franco}\} \texttt{@polimi.it}}
\end{center}

\maketitle

\begin{abstract}
Deep autoencoders have become a fundamental tool in various machine learning applications, ranging from dimensionality reduction and reduced order modeling of partial differential equations to anomaly detection and neural machine translation. Despite their empirical success, a solid theoretical foundation for their expressiveness remains elusive, particularly when compared to classical projection-based techniques. In this work, we aim to take a step forward in this direction by presenting a comprehensive analysis of what we refer to as symmetric autoencoders, a broad class of deep learning architectures ubiquitous in the literature. Specifically, we introduce a formal distinction between different classes of symmetric architectures, analyzing their strengths and limitations from a mathematical perspective. For instance, we show that the reconstruction error of symmetric autoencoders with orthonormality constraints can be understood by leveraging the well-renowned Eckart-Young-Schmidt (EYS) theorem. As a byproduct of our analysis, we end up developing the EYS initialization strategy for symmetric autoencoders, which is based on an iterated application of the Singular Value Decomposition (SVD). To validate our findings, we conduct a series of numerical experiments where we benchmark our proposal against conventional deep autoencoders, discussing the importance of model design and initialization.    
\end{abstract}

\begin{keywords}
autoencoders, error bounds, low-rank, Eckart-Young-Schmidt
\end{keywords}

\section{Introduction}
Originally developed in the early '90s as a nonlinear alternative to Principal Component Analysis (PCA) \cite{kramer1991nonlinear, kramer1992autoassociative, hinton1993autoencoders}, deep autoencoders have now gained traction in several fields, ranging from statistical learning to scientific machine learning. They are currently employed for tasks involving dimensionality reduction \cite{hinton2006reducing}, anomaly detection \cite{chen2018autoencoder}, image processing \cite{balle2017endtoend}, neural machine translation \cite{cho2014properties}, reduced order modeling of partial differential equations (PDEs) \cite{mucke2021reduced, franco2023deep} and more. 

The widespread adoption of autoencoders can be attributed to their ability to deliver competitive performance compared to traditional linear techniques --- such as PCA, Proper Orthogonal Decomposition (POD), and the Kosambi-Karhunen-Loève (KKL) \cite{chatterjee2000introduction, loeve1978probability}---, particularly when modeling complex phenomena that exhibit nonlinear behavior. However, our theoretical understanding of deep autoencoders is still far from being exhaustive. Compared to linear methods, which are very well-understood, autoencoders remain sort of a black-box approach, reason for which domain practitioners are often brought to rely upon empirical rules of thumb when it comes to their design, implementation and training. In this work, we aim to take a step forward in this direction by presenting a comprehensive analysis focused over a particular class of architectures which we refer to as \emph{symmetric autoencoders}. 

\paragraph{Existing literature} In the literature of deep autoencoders, symmetric architectures have emerged consistently during the years, with numerous researchers independently converging towards this design paradigm in an attempt to enhance model interpretability and reliability. For instance, in a recent work, Otto et al. \cite{otto2023learning}, have proposed the use of symmetric autoencoders with biorthogonality constraints for the study of dynamical systems. Conversely, in \cite{abiri2019establishing}, a symmetric denoising autoencoder is proposed for the imputation of missing data. Autoencoder networks with a symmetric encoder-decoder structure are also commonly found in the field of reduced order modeling of parametrized PDEs, see, e.g. \cite{mucke2021reduced, pichi2024graph, fresca2021comprehensive}.

Despite this convergence, however, most of these works have been carried out independently,  typically motivated by different applications and spanning across different research domains. In turn, no clear systematic investigation has been carried out yet, nor theoretically nor experimentally.

\paragraph{Our contribution} Within this work, we provide a mathematically rigorous introduction to symmetric autoencoders. We first discuss the general idea, with a particular focus on the choice of the activation function. Then, we identify two specialized classes of architectures --- autoencoders with biorthogonality and orthogonality constraints, respectively. In doing so, we discuss the inherent mathematical properties of these architectures, outlining their strengths and limitations. In particular, by drawing a parallel with linear reduction methods, we are able to derive novel error bounds connecting symmetric autoencoders to the well-known Singular Value Decomposition (SVD). Then, by leveraging the Eckart-Young-Schmidt theorem, we design a new initialization strategy, specifically tailored for symmetric autoencoder networks, which consistently proves superior to classical initialization routines in all of our experiments.

\paragraph{Work organization} The paper is organized as follows. First, in Section~\ref{sec:linear-dim-reduction}, we introduce the some notation and recall fundamental results in low-rank approximation theory that are essential to our analysis. 
Then, in Section~\ref{sec:symmetric}, we formally introduce and characterize deep symmetric autoencoders, along with a suitable theoretical framework to analyze their properties and capabilities, deriving the aforementioned error bounds. The practical implications of our findings, instead, are discussed in Section~\ref{sec:insights}, where we present our new  initialization strategy. Finally, in Section~\ref{sec:numerical-experiments}, we complement our analysis through an extensive set of numerical experiments, whereas, in the last section we draw some conclusions and indicate promising directions for future research.

\section{Mathematical backbone and notation}

\label{sec:linear-dim-reduction}

Before coming to our main topic,  which is the study of deep symmetric autoencoders, we provide a synthetic overview of the fundamental ingredients necessary for our analysis. We start by introducing some notation and recalling certain classical results concerning linear reduction, which will form the foundation of our approach. In doing so, we shall highlight the importance of the celebrated low-rank approximation theorem named after Eckart and Young \cite{eckart1936approximation}. Actually, hereby we present its extension to compact operators on Hilbert spaces—a result originally discovered by Schmidt back in 1907 during his studies on integral operators \cite{Schmidt1907}.

\subsection{Notation}
For clarity in our presentation, we introduce some notation concerning standard concepts in probability theory (such as covariance operators and Bochner-integrable random variables) and operator theory (including Hilbert-Schmidt operators, adjoints, and matrices).

\paragraph{Probability setting} Let $(\mathscr{P},\mathcal{A},\mathbb{P})$ be a probability space, consisting of an sample space $\mathscr{P}$, a $\sigma$-field $\mathcal{A}$, and a probability function $\mathbb{P}.$ Given a measurable map $f:\mathscr{P}\to[0,+\infty)$, we set
$\mathbb{E}[f]:=\int_{\mathscr{P}}f(\bmu)\mathbb{P}(d\bmu).$
For any given $q\in[1,+\infty)$, we denote by $L^q(\mathscr{P})$ the space of  measurable maps $g:\mathscr{P}\to\mathbb{R}$ with finite second moment, i.e., $\mathbb{E}|g|^q<+\infty.$ For all such maps, we define $\mathbb{E}[g]:=\int_{\mathscr{P}}g(\bmu)\mathbb{P}(d\bmu)$, as usual \cite{dudley2018real}.

For any $\ndim\in\mathbb{N}_+$, we extend the above notation to the case of vector-valued maps $\bu:\mathscr{P}\to\mathbb{R}^{\ndim}$ using Bochner integration. Precisely, we write $L^q(\mathscr{P};\mathbb{R}^{\ndim})$ for the space of Bochner $q$-integrable maps, i.e. the collection of all measurable maps $\bu:\mathscr{P}\to\mathbb{R}^{\ndim}$ such that $\mathbb{E}\|\bu\|^q<+\infty,$ with $\|\cdot\|$  the Euclidean norm. For all such maps, we set
$$\mathbb{E}[\bu]:=\left[\mathbb{E}[u_1],\dots,\mathbb{E}[u_{\ndim}]\right]^\top\in\mathbb{R}^{\ndim},$$
with $u_j(\bmu):=\mathbf{e}_j^\top\bu(\bmu)$ the $j$th output of $\bu$ and $\{\mathbf{e}_j\}_{j=1}^{\ndim}$ the canonical basis of $\mathbb{R}^{\ndim}.$ 

We recall that, up to quotient operations, $L^2(\mathscr{P})$ and
$L^2(\mathscr{P};\mathbb{R}^{\ndim})$ are Hilbert spaces once equipped with the inner products
$\langle f, g\rangle_{L^{2}(\mathscr{P})}:=\mathbb{E}[fg]$ and $\langle \bu,\bv\rangle_{L^{2}(\mathscr{P};\mathbb{R}^{\ndim})}:=\mathbb{E}\left[\bu^\top\bv\right],$ respectively.
Lastly, for all $\bu\in L^{2}(\mathscr{P};\mathbb{R}^{\ndim})$ we denote by
$$\cmatrix[\bu]:=\mathbb{E}\left[(\bu-\mathbb{E}[\bu]) (\bu-\mathbb{E}[\bu])^\top\right]=\left(\mathbb{E}[(u_i-\mathbb{E}[u_j])(u_j-\mathbb{E}[u_j])]\right)_{i,j=1}^{\ndim}\in\mathbb{R}^{n_0\times \ndim}$$
the covariance matrix of the random vector $\bu$.

\paragraph{Hilbert-Schmidt operators} We write $\mathscr{H}(H_1,H_2)$ for the space of Hilbert-Schmidt operators from $H_1$ to $H_2$, each $H_i$ being a separable Hilbert space. We recall that the latter is defined as the collection of all continuous linear operators $T:H_1\to H_2$ such that
$\|T\|_{\textnormal{HS}}^2:=\sum_{i=1}^{+\infty}\|Te_i\|_{H_2}^2<+\infty,$
where $\{e_i\}_i$ is an orthonormal basis $H_1$. The quantity $\|T\|_{\textnormal{HS}}$ is independent on the choice of such basis and it defines a norm over $\mathscr{H}(H_1,H_2)$ \cite{conway2019course}. For all such operators, we write
$\rank(T):=\dim(T(H_1)),$
to the denote their \textit{rank}, whereas we use $T^*$ for their adjoint operator, i.e., $T^*:H_2\to H_1$ such that $\langle Tg,v\rangle_{H_2}=\langle g,T^*v\rangle_{H_1}$
for all $g\in H_1$ and $v\in H_2$. We recall that, if $H_1$ and $H_2$ are both finite-dimensional then $T$ and $T^*$ can be represented by a matrix and its transpose.

\paragraph{Matrices}
We write
$\mathscr{O}_{m,n}:=\{\mathbf{A}\in\mathbb{R}^{m\times n}\;:\;\mathbf{A}^\top\mathbf{A}=\mathbf{I}_n\}$
for the set of $m\times n$ orthonormal matrices. Given a positive-semidefinite matrix $\mathbf{M}\in\mathbb{R}^{m\times m}$, we write $\lambda_i(\mathbf{M})$ for its $i$th eigenvalue, listed in decreasing order and counting multiplicities. Finally, given any matrix $\mathbf{B}\in\mathbb{R}^{m\times n}$ we write $\|\mathbf{B}\|_{\textnormal{HS}}$ and $\|\mathbf{B}\|_{\textnormal{op}}$ for its Frobenius and operator norms, respectively.

\subsection{Fundamental results in linear reduction}

Given a random vector $\bu$ taking values in $\mathbb{R}^{\ndim}$, the purpose of linear reduction techniques is to find an approximate representation of $\bu$ by introducing a carefully chosen affine subspace $V\subset\mathbb{R}^{\ndim}$, of dimension $\dim(V)=n\ll \ndim$, designed such that the discrepancy
$\mathbb{E}\left[\inf_{\mathbf{v}\in V}\|\bu-\mathbf{v}\|^2\right]$
is minimized. Algebraically, this boils to down finding a
suitable matrix $\bV\in\mathscr{O}_{n_0,n}$ and a suitable vector $\bq\in\mathbb{R}^{n_0}$ such that the reconstruction error
\begin{equation}
    \label{eq:pod-minimization}
    \mathbb{E}\left\|\bu-\left[\bV\bV^\top(\bu-\bq)+\bq\right]\right\|^2,
\end{equation}
is as small as possible.

The main reason behind such construction is that, once $\bV$ and $\bq$ have been identified, one can use $\mathbf{c}:=\bV^\top(\bu-\bq)$, an $n$-dimensional random vector, as a proxy for $\bu$. In fact, $\bV$ and $\bq$ naturally define an encoding and a decoding strategy,
\begin{equation}
\label{eq:linear-encoding-decoding}
\mathbb{R}^{\ndim}\ni\bu\mapsto \bV^\top(\bu-\bq)\in\mathbb{R}^{n}\quad\quad\text{and}\quad\quad\mathbb{R}^{n}\ni\mathbf{c}\mapsto \bV\mathbf{c}+\bq\in\mathbb{R}^{\ndim},\end{equation}
respectively. Both maps are affine and 1-Lipschitz continuous, two properties that contribute to the stability and interpretability of linear reduction methods. 

What is most remarkable, however, is that the solution to Eq. \eqref{eq:pod-minimization} is known in closed form. In fact, the latter can be computed using an algorithm that is commonly referred to as Principal Orthogonal Decomposition (POD). At its core, POD relies on the fundamental Eckart-Young-Schmidt theorem, which we state below.

\begin{theorem}[Eckart-Young-Schmidt]
\label{theorem:EYS}
Let $(H_1,\|\cdot\|_1)$ and $(H_2,\|\cdot\|_2)$ be two separable Hilbert spaces. Let $T\in\mathscr{H}(H_1,H_2)$. There exists a vanishing nonincreasing sequence $s_1\ge s_2\ge\dots\ge0$, an orthonormal basis $\{\xi_i\}_i$ of $H_1$ and an orthonormal basis $\{v_i\}_i$ of $H_2$ such that
\begin{equation}\label{eq:svd}T=\sum_{i=1}^{\rank(T)}s_iv_i\langle\xi_i,\cdot\rangle_{H_1}.\end{equation}
Furthermore, for all $1\le n < \rank(T)$, the truncated operator $T_{n}:=\sum_{i=1}^{n}s_iv_i\langle\xi_i,\cdot\rangle_{H_1}$ is the best $n$-rank approximation of $T$ and satisfies
$$\|T-T_n\|_{\textnormal{HS}}^2=\min\left\{\|T-L\|_{\textnormal{HS}}^2\;:\;\rank(L)\le n\right\}=\sum_{i=n+1}^{\rank(T)}s_i^2,$$
Additionally, $s_i^2$ and $v_i$ are the eigenvalues and eigenvectors of $TT^*:H_2\to H_2$, whereas $s_i^2$ and $\xi_i$ are the eigenvalues and eigenvectors of $T^*T:H_1\to H_1.$
\end{theorem}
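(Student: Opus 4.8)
The plan is to split the statement into two independent parts. First I would establish the existence of the decomposition \eqref{eq:svd} together with the spectral characterization of $s_i^2$, $v_i$ and $\xi_i$; then I would prove the optimality of the truncation $T_n$ with respect to the Hilbert--Schmidt norm and compute the residual explicitly.

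For the first part, I would start from the fact that every Hilbert--Schmidt operator is compact, so that $T^*T:H_1\to H_1$ is a compact, self-adjoint, positive semidefinite operator. Since $H_1$ is separable, the spectral theorem for such operators yields an orthonormal basis $\{\xi_i\}_i$ of $H_1$ consisting of eigenvectors, with eigenvalues that I arrange in nonincreasing order $\lambda_1\ge\lambda_2\ge\dots\ge 0$ and that necessarily vanish; I set $s_i:=\sqrt{\lambda_i}$. Because $\|T\xi_i\|_{2}^2=\langle T^*T\xi_i,\xi_i\rangle_{1}=s_i^2$, the vectors $v_i:=s_i^{-1}T\xi_i$ are well defined whenever $s_i>0$, and the computation $\langle v_i,v_j\rangle_{2}=(s_is_j)^{-1}\langle T^*T\xi_i,\xi_j\rangle_{1}=\delta_{ij}$ shows they are orthonormal; I then complete them to an orthonormal basis $\{v_i\}_i$ of $H_2$. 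Expanding an arbitrary $x\in H_1$ in the basis $\{\xi_i\}_i$ and using that $T\xi_i=0$ whenever $s_i=0$ gives \eqref{eq:svd} and identifies $\rank(T)$ with the number of nonzero $s_i$. The identity $TT^*v_i=s_i^{-1}T(T^*T\xi_i)=s_i^2 v_i$ gives the claim about $TT^*$, the statement for $T^*T$ is built in by construction, and $\sum_i s_i^2=\|T\|_{\textnormal{HS}}^2<\infty$ re-confirms that $(s_i)_i$ vanishes.

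For the optimality, I would first evaluate the Hilbert--Schmidt norm of $T-T_n$ on the basis $\{\xi_i\}_i$, which gives directly $\|T-T_n\|_{\textnormal{HS}}^2=\sum_{i>n}s_i^2$. For the lower bound, let $L$ be any operator with $\rank(L)\le n$; we may assume $L$ is Hilbert--Schmidt, otherwise the bound is trivial. Let $P$ be the orthogonal projection onto the (finite-dimensional, hence closed) range of $L$, so that $L=PL$. Writing $T-L=(PT-L)+(I-P)T$ and noting that $PT-L$ has range inside $\mathrm{range}(P)$ while $(I-P)T$ has range inside $\mathrm{range}(P)^\perp$, these two operators are orthogonal for the Hilbert--Schmidt inner product, whence $\|T-L\|_{\textnormal{HS}}^2\ge\|(I-P)T\|_{\textnormal{HS}}^2=\|T\|_{\textnormal{HS}}^2-\|PT\|_{\textnormal{HS}}^2$. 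It then remains to show $\|PT\|_{\textnormal{HS}}^2\le\sum_{i=1}^n s_i^2$. Using $T\xi_i=s_iv_i$ one gets $\|PT\|_{\textnormal{HS}}^2=\sum_i s_i^2\|Pv_i\|_{2}^2$, where the weights $t_i:=\|Pv_i\|_{2}^2$ satisfy $0\le t_i\le 1$ and $\sum_i t_i=\mathrm{tr}(P)\le n$; since $(s_i^2)_i$ is nonincreasing, a short rearrangement (``bathtub'') argument gives $\sum_i s_i^2 t_i\le\sum_{i=1}^n s_i^2$. Combining the pieces yields $\|T-L\|_{\textnormal{HS}}^2\ge\sum_{i>n}s_i^2=\|T-T_n\|_{\textnormal{HS}}^2$, which is the desired minimality.

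I expect the main obstacle to be not any single computation but the infinite-dimensional bookkeeping: invoking the spectral decomposition with the correct separability hypothesis, handling the zero singular values and the completion of $\{v_i\}_i$ consistently when $\dim H_1\neq\dim H_2$, and carefully justifying the orthogonal splitting of $T-L$ in the Hilbert--Schmidt inner product (including the reduction to Hilbert--Schmidt $L$). The genuinely delicate inequality is $\|PT\|_{\textnormal{HS}}^2\le\sum_{i=1}^n s_i^2$, which is where the ordering of the singular values enters in an essential way; everything else is either a citation (compactness of Hilbert--Schmidt operators, spectral theorem for compact self-adjoint operators) or a routine verification.
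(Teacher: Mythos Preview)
Your proposal is correct and follows the standard route (spectral theorem for $T^*T$, polar-type construction of the $v_i$, then a projection/bathtub argument for optimality). Note, however, that the paper does not give its own proof of this theorem at all: it simply cites \cite[Theorem~VI.17]{reed1980methods}, so you have supplied considerably more detail than the paper does, and there is nothing substantive to compare against.
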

\begin{proof}
    See, e.g., \cite[Theorem VI.17]{reed1980methods}.
\end{proof}

For a given operator $T$, the scalars $\{s_i\}_i$ are typically referred to as \emph{singular values}. Similarly, $\{v_i\}_i$ and $\{\xi_i\}$ are known as left and right singular vectors, respectively. For this reason, the representation formula in Eq. \eqref{eq:svd} is often called \emph{singular values decomposition} (SVD), but is also known as \emph{low-rank decomposition}, \emph{polar form} or \emph{Schmidt decomposition}.
The connection between \eqref{eq:pod-minimization} and the Eckart-Young-Schmidt theorem lies in the following Proposition, which, in light of its simplicity, we present without proof.
\begin{proposition}
    \label{prop:isometry}
    Given any $\bu\in L^2(\mathscr{P};\mathbb{R}^{\ndim})$, define $T_\bu:L^2(\mathscr{P})\to\mathbb{R}^{\ndim}$ as
    $$T_\bu g:=\mathbb{E}[\bu g].$$
    The map $\bu\to T_\bu$ is a surjective linear isometry from $L^2(\mathscr{P};\mathbb{R}^{\ndim})$ to $\mathscr{H}(L^{2}(\mathscr{P}); \mathbb{R}^{\ndim}).$
\end{proposition}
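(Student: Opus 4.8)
The plan is to establish, in order: (i) that $T_\bu$ is well defined and Hilbert--Schmidt; (ii) that $\bu\mapsto T_\bu$ is linear and norm-preserving, hence an injective isometry; and (iii) that it is onto.

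For (i), I would first observe that for $g\in L^2(\mathscr{P})$ the Cauchy--Schwarz inequality gives $\mathbb{E}\|\bu g\|\le\|\bu\|_{L^2(\mathscr{P};\mathbb{R}^{\ndim})}\|g\|_{L^2(\mathscr{P})}<+\infty$, so $\bu g$ is Bochner-integrable and $T_\bu g=\mathbb{E}[\bu g]\in\mathbb{R}^{\ndim}$ is meaningful; linearity of $T_\bu$ in $g$, and of $\bu\mapsto T_\bu$, is then immediate from linearity of the expectation. To check the Hilbert--Schmidt property I would fix an orthonormal basis $\{e_k\}_k$ of $L^2(\mathscr{P})$ and, writing $u_j:=\mathbf{e}_j^\top\bu$, use $\mathbf{e}_j^\top T_\bu e_k=\mathbb{E}[u_j e_k]=\langle u_j,e_k\rangle_{L^2(\mathscr{P})}$ to get $\|T_\bu e_k\|^2=\sum_{j=1}^{\ndim}\langle u_j,e_k\rangle_{L^2(\mathscr{P})}^2$.

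For (ii), summing the last identity over $k$, interchanging the two nonnegative sums by Tonelli's theorem, and applying Parseval's identity to each $u_j\in L^2(\mathscr{P})$ should yield
\[
\|T_\bu\|_{\textnormal{HS}}^2=\sum_k\sum_{j=1}^{\ndim}\langle u_j,e_k\rangle_{L^2(\mathscr{P})}^2=\sum_{j=1}^{\ndim}\|u_j\|_{L^2(\mathscr{P})}^2=\mathbb{E}\|\bu\|^2=\|\bu\|_{L^2(\mathscr{P};\mathbb{R}^{\ndim})}^2,
\]
so in particular $T_\bu\in\mathscr{H}(L^2(\mathscr{P});\mathbb{R}^{\ndim})$ and $\bu\mapsto T_\bu$ is a linear isometry, hence injective.

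For (iii), given an arbitrary $T\in\mathscr{H}(L^2(\mathscr{P});\mathbb{R}^{\ndim})$ I would represent it componentwise: for each $j\in\{1,\dots,\ndim\}$ the functional $g\mapsto\mathbf{e}_j^\top Tg$ is bounded and linear on $L^2(\mathscr{P})$, so the Riesz representation theorem furnishes $u_j\in L^2(\mathscr{P})$ with $\mathbf{e}_j^\top Tg=\langle u_j,g\rangle_{L^2(\mathscr{P})}=\mathbb{E}[u_j g]$ for every $g$; collecting these into $\bu:=(u_1,\dots,u_{\ndim})^\top$, which lies in $L^2(\mathscr{P};\mathbb{R}^{\ndim})$ since it has finitely many $L^2$ components, gives $\mathbf{e}_j^\top T_\bu g=\mathbb{E}[u_j g]=\mathbf{e}_j^\top Tg$ for all $j$ and all $g$, i.e.\ $T_\bu=T$. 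I do not expect a genuine obstacle: the only steps deserving a word of justification are the Tonelli interchange in the norm computation (legitimate because every summand is nonnegative) and the basis-independence of $\|\cdot\|_{\textnormal{HS}}$ used to define it, which is already recalled in the preliminaries; everything else is routine bookkeeping.
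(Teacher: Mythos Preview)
Your argument is correct and complete. The paper actually presents this proposition \emph{without proof}, remarking only that it ``can be understood as a generalization of Riesz's representation theorem to vector-valued functionals''; your componentwise application of Riesz for surjectivity, together with the Parseval computation for the isometry, is exactly the natural route that remark points to.
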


Essentially, Proposition~\ref{prop:isometry} can be understood as generalization of Riesz's representation theorem to vector-valued functionals, and it can be used to represent Bochner square-integrable random variables as Hilbert-Schmidt operators. By combining the above with the Eckart-Young-Schmidt theorem, it is straightforward to prove the following result, which is the cornerstone underlying the POD algorithm. Even though the proof is classical and can be found in many textbooks, see, e.g., \cite[Proposition 6.3]{quarteroni2015reduced} or \cite[Theorem 3.8]{lanthaler2022error}, the notation is often quite different. For this reason, we provide the interested reader with a revisitation of the proof in the Appendix.

\begin{theorem}[Optimality of POD]
\label{theorem:pod}
Let $\bu\in L^{2}(\mathscr{P};\mathbb{R}^{\ndim})$. Set $r:=\rank(T_\bu)$ and fix any positive integer $n<r\le N.$  Consider a decomposition of $\cmatrix[\bu]$ in diagonal form given as
$$\cmatrix[\bu]=\mathbf{X}\mathbf{D}\mathbf{X}^\top,$$
where $\mathbf{D}:=\text{diag}(\lambda_{1}(\cmatrix[\bu]),\dots,\lambda_{r}(\cmatrix[\bu]))$ and $\mathbf{X}\in\mathscr{O}_{\ndim,r}$. 
Define
$\bV_*\in\mathbb{R}^{n_0\times n}$ by extracting the first $n$ columns of $\mathbf{X}$ and let $\bq_*:=\mathbb{E}[\bu]$.
Then,
\begin{multline}
    \mathbb{E}\|\bu-\left[\bV_*\bV_*^\top(\bu-\bq_*)+\bq_*\right]\|^2=\\=\min_{\substack{\bV\in\mathscr{O}_{\ndim,n}\\\bq\in\mathbb{R}^{\ndim}}}\mathbb{E}\left\|\bu-\left[\bV\bV^\top(\bu-\bq)+\bq\right]\right\|^2=\sum_{i=n+1}^r\lambda_i(\cmatrix[\bu]).
\end{multline}
\end{theorem}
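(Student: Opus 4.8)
The plan is to turn the constrained minimization of \eqref{eq:pod-minimization} into a rank-constrained operator-approximation problem, so that Theorem~\ref{theorem:EYS} applies through the isometry of Proposition~\ref{prop:isometry}.

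\emph{Step 1 (removing the bias).} For any fixed $\bV\in\mathscr{O}_{\ndim,n}$ one has $\bu-[\bV\bV^\top(\bu-\bq)+\bq]=(\bI-\bV\bV^\top)(\bu-\bq)$. Writing $\bu-\bq=(\bu-\mathbb{E}[\bu])+(\mathbb{E}[\bu]-\bq)$, the second summand is deterministic and the resulting cross term vanishes because $\mathbb{E}[\bu-\mathbb{E}[\bu]]=0$; hence
\[\mathbb{E}\|(\bI-\bV\bV^\top)(\bu-\bq)\|^2=\mathbb{E}\|(\bI-\bV\bV^\top)(\bu-\mathbb{E}[\bu])\|^2+\|(\bI-\bV\bV^\top)(\mathbb{E}[\bu]-\bq)\|^2.\]
Thus $\bq=\bq_*=\mathbb{E}[\bu]$ is optimal uniformly in $\bV$, and it remains to minimize $\mathbb{E}\|(\bI-\bV\bV^\top)\tilde\bu\|^2$ over $\bV\in\mathscr{O}_{\ndim,n}$, where $\tilde\bu:=\bu-\mathbb{E}[\bu]$.

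\emph{Step 2 (passing to operators).} By Proposition~\ref{prop:isometry}, $\mathbb{E}\|(\bI-\bV\bV^\top)\tilde\bu\|^2=\|T_{(\bI-\bV\bV^\top)\tilde\bu}\|_{\textnormal{HS}}^2$, and since $T_{(\bI-\bV\bV^\top)\tilde\bu}g=(\bI-\bV\bV^\top)\mathbb{E}[\tilde\bu g]=(\bI-\bV\bV^\top)T_{\tilde\bu}g$, the objective equals $\|T_{\tilde\bu}-\bV\bV^\top T_{\tilde\bu}\|_{\textnormal{HS}}^2$. A direct computation shows $T_{\tilde\bu}T_{\tilde\bu}^*=\mathbb{E}[\tilde\bu\tilde\bu^\top]=\cmatrix[\bu]$, so by the last assertion of Theorem~\ref{theorem:EYS} the squared singular values of $T_{\tilde\bu}$ are the $\lambda_i(\cmatrix[\bu])$ and the left singular vectors $v_i$ are corresponding orthonormal eigenvectors of $\cmatrix[\bu]$; moreover $\rank(T_{\tilde\bu})\le\rank(T_\bu)=r$, since $T_{\tilde\bu}g=T_\bu(g-\mathbb{E}[g])$ implies that the range of $T_{\tilde\bu}$ is contained in that of $T_\bu$.

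\emph{Step 3 (matching bounds).} For any admissible $\bV$, the operator $\bV\bV^\top T_{\tilde\bu}$ has rank at most $n$, so Theorem~\ref{theorem:EYS} gives $\|T_{\tilde\bu}-\bV\bV^\top T_{\tilde\bu}\|_{\textnormal{HS}}^2\ge\sum_{i=n+1}^{\rank(T_{\tilde\bu})}s_i^2=\sum_{i=n+1}^{r}\lambda_i(\cmatrix[\bu])$, the last equality holding because $\lambda_i(\cmatrix[\bu])=0$ for $i>\rank(T_{\tilde\bu})$. Conversely, choosing the EYS decomposition of $T_{\tilde\bu}$ so that $v_1,\dots,v_n$ are the first $n$ columns of $\mathbf{X}$ (possible up to permuting singular vectors inside degenerate eigenspaces), one obtains $\bV_*\bV_*^\top=\sum_{i=1}^n v_iv_i^\top$ and hence $\bV_*\bV_*^\top T_{\tilde\bu}=\sum_{i=1}^n s_iv_i\langle\xi_i,\cdot\rangle_{L^2(\mathscr{P})}$, which is exactly the truncated operator $(T_{\tilde\bu})_n$; therefore $\|T_{\tilde\bu}-\bV_*\bV_*^\top T_{\tilde\bu}\|_{\textnormal{HS}}^2=\sum_{i=n+1}^{r}\lambda_i(\cmatrix[\bu])$. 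Together with Step~1, this proves both equalities in the statement.

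\emph{Main obstacle.} None of the steps is deep; the only genuine care lies in the bookkeeping — reducing $\bq$ to $\mathbb{E}[\bu]$ uniformly in $\bV$, using the rank-$n$ constraint correctly to obtain the lower bound, and handling degeneracies in the spectrum of $\cmatrix[\bu]$, including the case $\rank(T_{\tilde\bu})<r$ where some $\lambda_i(\cmatrix[\bu])$ with $i\le r$ vanish, so that the index ranges in the final identity are exactly right.
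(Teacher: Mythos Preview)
Your proof is correct and follows essentially the same route as the paper's own argument: first reduce to $\bq_*=\mathbb{E}[\bu]$ via a bias--variance decomposition, then use the isometry $\bu\mapsto T_\bu$ to recast the projection error as a Hilbert--Schmidt distance, identify $\cmatrix[\bu]=T_{\tilde\bu}T_{\tilde\bu}^*$, and finally invoke Eckart--Young--Schmidt for both the lower bound and its attainment by $\bV_*$. If anything, your treatment of the potential discrepancy between $\rank(T_{\tilde\bu})$ and $r=\rank(T_\bu)$ is slightly more explicit than the paper's, which simply assumes $\mathbb{E}[\bu]=0$ without loss of generality.
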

\begin{proof}
See Appendix~\ref{appendix:proofs}.
\end{proof}

\paragraph{Summary and desiderata}
\noindent The above result, combined with the rich structure provided by linear projections, highlights the following key points:
\begin{itemize}[leftmargin=0.6cm, itemsep=0.1cm]
    \item [{\em (i)}] the problem of linear reduction can be solved in closed form by the POD algorithm, which involves diagonalizing the uncentered covariance matrix $\cmatrix[u]$;
    \item [{\em (ii)}] the accuracy of the approximation is driven by the decay of the eigenvalues of $\cmatrix[u]$, which ultimately relates to the Eckart-Young-Schmidt theorem;
    \item [{\em (iii)}] projection methods, including POD, are representation consistent, meaning that they map $\bu$ and its reconstruction $\tilde{\bu}:=\bV\bV^\top(\bu-\mathbf{q})+\mathbf{q}$ onto the same latent representation. Indeed, being $\mathbf{c}=\bV^\top(\bu-\mathbf{q})$, one has $$\tilde{\mathbf{c}}=\bV^\top\tilde{\bu}=\bV^\top\left[\bV\bV^\top(\bu-\mathbf{q})+\mathbf{q}-\mathbf{q}\right]=\bV^\top\bV\bV^\top(\bu-\mathbf{q})=\bV^\top(\bu-\mathbf{q})=\mathbf{c};$$
    \item [{\em (iv)}] POD, and linear projection methods in general, are grounded on a stable decoding mechanism, $\mathbf{c}\mapsto \mathbf{V}\mathbf{c}$. In fact, $$\|\bV\mathbf{c}-\bV\mathbf{c}'\|=\|\mathbf{c}-\mathbf{c}'\|,$$
    the two norms being respectively in $\mathbb{R}^{n_0}$ and $\mathbb{R}^{n_1}.$
\end{itemize}
\;\\Our purpose for the remainder of this work, is to bring similar insights in the context of nonlinear reduction by means of deep symmetric autoencoders.

\section{Symmetric autoencoders}
\label{sec:symmetric}

Let $\ndim > n_1\ge\dots\ge n_l>0$ be positive integers. A symmetric autoencoder with input dimension $n_0$, hidden dimensions $n_1,\dots, n_{l-1}$ and \emph{latent dimension} $n_l$, is a tuple of the form
$$\autoencoder=\{(\bVE_j, \bVD_j, \biasE_j, \biasD_j, \act)\}_{j=1}^{n_l},$$
where:

\begin{itemize}[leftmargin=*, itemsep=5pt, parsep=0pt, after=\vspace{5pt}, before=\vspace{1pt}]
    \item $\rho:\mathbb{R}\to\mathbb{R}$ is a given bilipschitz activation (cf. Lemma~\ref{lemma:bilipschitz}),
    \item $\bVE_j\in\mathbb{R}^{n_{j}\times n_{j-1}}$ and $\bVD_j\in\mathbb{R}^{n_{j-1}\times n_{j}}$ are suitable weight matrices, 
    \item $\biasE_j\in\mathbb{R}^{n_{j}}$ and $\biasD_j\in\mathbb{R}^{n_{j-1}}$ are given bias vectors.
\end{itemize}  

Each symmetric autoencoder defines an encoding procedure $\mathscr{E}(\autoencoder,\cdot):\mathbb{R}^{\ndim}\to\mathbb{R}^{n_l}$ via the iterative scheme
$$
\mathscr{E}(\autoencoder,\bu):=(e_l\circ\dots\circ e_1)(\bu),$$
where $e_j:\;\mathbf{h}\mapsto \act(\bVE_j\mathbf{h}+\biasE_j).$

Here, the action of $\act$ over column vectors is intended componentwise. Notice that, in order to ease notation, we do not stress the dependency of each $e_j$ on $\autoencoder$, although it is clear that $e_j=e_j(\autoencoder$).

Similarly, $\autoencoder$ also defines a decoding routine, $\mathscr{D}(\autoencoder,\cdot):\mathbb{R}^{n_l}\to\mathbb{R}^{\ndim}$, defined via
$$\mathscr{D}(\autoencoder,\bc):=(d_1\circ\dots\circ d_l)(\bu),$$
where $d_j:\mathbf{h}\mapsto\bVD_j\invact(\mathbf{h})+\biasD_j.$
Together, these are used to define the reconstruction algorithm
$$\mathscr{R}(\autoencoder,\bu):=\mathscr{D}(\autoencoder,\mathscr{E}(\autoencoder,\bu)),$$
which acts as a map $\mathscr{R}(\autoencoder,\cdot):\mathbb{R}^{\ndim}\to\mathbb{R}^{\ndim}$. We refer the reader to Fig.~\ref{fig:scheme} for a schematic representation: there, it will become apparent why some authors describe symmetric autoencoders as having a ``butterfly" architecture \cite{abiri2019establishing}.

\begin{figure}
    \centering
    \includegraphics[width=\linewidth]{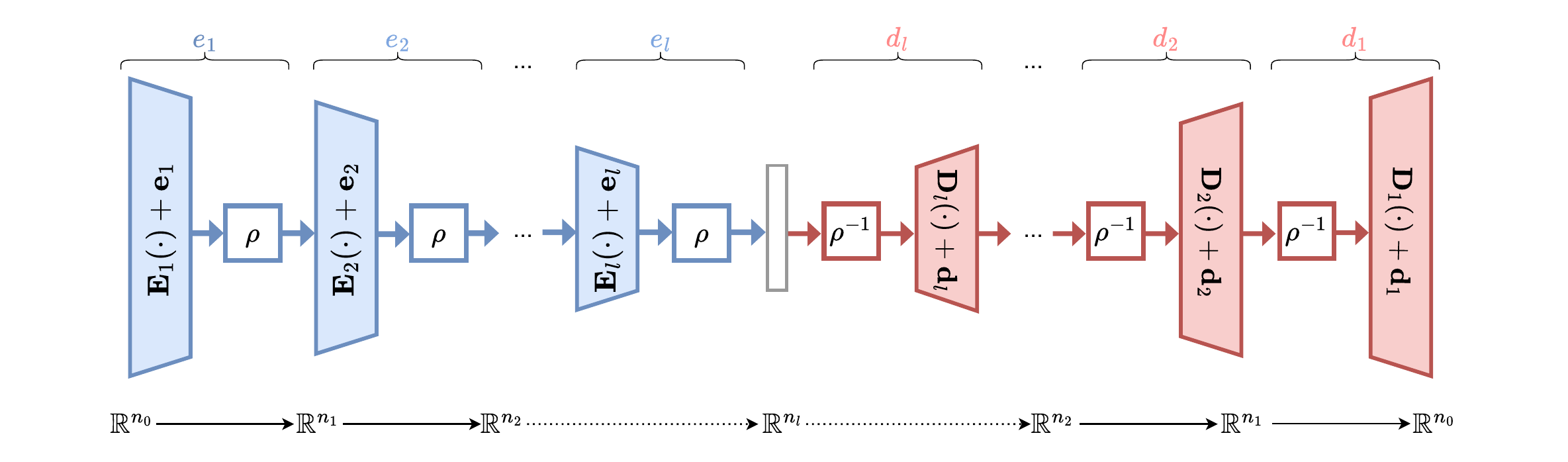}
    \caption{Schematic representation of a deep symmetric autoencoder.}
    \label{fig:scheme}
\end{figure}

\subsection{About the activation function}
We highlight the fact that, differently from other works in the literature, we require the activation function to be invertible, so that we can use $\act$ for the encoder and $\invact$ for the decoder, respectively. The same construction can be found in \cite{otto2023learning}.
We argue that this requirement is fundamental to obtain a truly symmetric architecture, as it highlights the specular role of the encoding and decoding mechanisms.

Additionally, in order to guarantee a minimal form of stability, we also require $\act$ to be bilipschitz. It can be shown, in fact, that this is equivalent to having both $\act$ and $\invact$ to be Lipschitz continuous: see Lemma~\ref{lemma:bilipschitz} right below.

\begin{lemma}
    \label{lemma:bilipschitz}
    Let $\act:\mathbb{R}\to\mathbb{R}$ be bilipschitz, meaning that there exists two positive constants, $\eta>0$ and $L>0$, such that
    \begin{equation}
        \label{eq:bilipschitz}\eta|x-y|\le|\act(x)-\act(y)|\le L|x-y|\quad\forall x,y\in\mathbb{R}.
    \end{equation}
    Then, $\act$ is bijective and its inverse is also bilipschitz. Furthermore, Eq.~\eqref{eq:bilipschitz} holds with $\eta=1/\lip{\invact}$ and $L=\lip{\act}.$
\end{lemma}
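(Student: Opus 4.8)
The plan is to prove the three claims in order: that $\act$ is a bijection, that $\invact$ is bilipschitz, and that the constants may be taken as stated. Throughout, everything will be squeezed out of the two inequalities in \eqref{eq:bilipschitz}.

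First I would establish bijectivity. Injectivity is immediate from the left-hand inequality in \eqref{eq:bilipschitz}: if $\act(x)=\act(y)$ then $\eta|x-y|\le 0$, so $x=y$. The right-hand inequality makes $\act$ Lipschitz, hence continuous, and a continuous injection of the interval $\mathbb{R}$ into itself is necessarily strictly monotone (a standard consequence of the intermediate value theorem; see e.g.\ \cite{dudley2018real}). For surjectivity I would again invoke the left-hand inequality: $|\act(x)-\act(0)|\ge\eta|x|\to+\infty$ as $|x|\to+\infty$, so the image of $\act$ --- an interval, by monotonicity and continuity --- is unbounded both above and below, whence $\act(\mathbb{R})=\mathbb{R}$. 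This yields a well-defined inverse $\invact:\mathbb{R}\to\mathbb{R}$.

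Next I would transfer the bounds to $\invact$. For arbitrary $u,v\in\mathbb{R}$, set $x:=\invact(u)$ and $y:=\invact(v)$, so that $u=\act(x)$ and $v=\act(y)$; substituting into \eqref{eq:bilipschitz} gives $\eta|\invact(u)-\invact(v)|\le|u-v|\le L|\invact(u)-\invact(v)|$, which rearranges to $\tfrac1L|u-v|\le|\invact(u)-\invact(v)|\le\tfrac1\eta|u-v|$. Hence $\invact$ is bilipschitz, and in particular $\lip{\invact}\le 1/\eta<+\infty$.

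Finally, for the optimal constants: the bound $|\act(x)-\act(y)|\le\lip{\act}\,|x-y|$ is just the definition of $\lip{\act}$, so $L=\lip{\act}$ is admissible. For the matching lower bound, from $\lip{\invact}=\sup_{u\ne v}|\invact(u)-\invact(v)|/|u-v|$, substituting $u=\act(x)$, $v=\act(y)$ with $x\ne y$ yields $|x-y|/|\act(x)-\act(y)|\le\lip{\invact}$, i.e.\ $|\act(x)-\act(y)|\ge|x-y|/\lip{\invact}$, so $\eta=1/\lip{\invact}$ is admissible; both constants are positive and finite since $\act$ is non-constant by \eqref{eq:bilipschitz}. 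The only step that requires more than bare manipulation of \eqref{eq:bilipschitz} is the surjectivity of $\act$, which rests on the elementary topological fact that continuous injections of $\mathbb{R}$ are strictly monotone; I do not expect any genuine difficulty there.
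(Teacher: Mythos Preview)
Your proof is correct and follows essentially the same approach as the paper: injectivity from the lower bound, surjectivity from continuity together with $|\act(x)|\to\infty$, and the bilipschitz bounds for $\invact$ by substituting $u=\act(x)$, $v=\act(y)$ into \eqref{eq:bilipschitz}. Your explicit verification that the constants $L=\lip{\act}$ and $\eta=1/\lip{\invact}$ are admissible is a little more detailed than the paper's version, but the argument is the same.
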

\begin{proof}
    See Appendix~\ref{appendix:proofs}.
\end{proof}

\sloppy{
This bilipschitz control will be fundamental for our construction, as it ensures a stable behavior both during training and during inference. Indeed, it is straightforward to prove that, for any $\autoencoder = \{(\bVE_j, \bVD_j, \biasE_j, \biasD_j, \act)\}_{j=1}^{n_l}$, one has $\lip{\mathscr{R}(\autoencoder, \cdot)} \propto (\lip{\act}\lip{\invact})^{l-1}$.} In this respect, we take the chance to introduce the notion of \emph{sharpness}, a quantity that is specific of bilipschitz activations and that we define as
\begin{equation}
    \label{eq:sharpness}
    \angle(\act) = \lip{\act}\lip{\invact} - 1 \ge 0.
\end{equation}
Essentially, the sharpness provides a quantitative measure of the instability brought by $\act$, and also serves to quantify its degree of nonlinearity: notice, in fact, that $\angle(\act)\equiv0 $ whenever $\act$ is linear. 

Examples of bilipschitz activations include:\\

\begin{itemize}[itemsep = 4pt, leftmargin = 0.6cm]
    \item[{\em (i)}] the $(\alpha,\beta)$-LeakyReLU activation, already considered in \cite{teng2019invertible}. Given $\alpha,\beta>0$, with $\alpha\neq\beta$, the latter is defined as
    \begin{equation*}
        \operatorname{LeakyReLU}_{\alpha,\beta}(x) = \alpha x \mathbf{1}_{(-\infty,0)}(x) + \beta x \mathbf{1}_{[0,\infty)}(x),
    \end{equation*}
    where $\mathbf{1}_E$ denotes the characteristic function of the set $E$;
    
    \item[{\em (ii)}] the $\theta$-Hyperbolic activation, first proposed by Otto et al. \cite{otto2023learning}. Given $\theta\in(0,\pi/4)$, the latter reads
    \begin{equation*}
        \operatorname{HypAct}_{\theta}(x) = \frac{b_\theta}{a_\theta}x- \frac{\sqrt 2}{a_\theta \sin\theta}+ \frac{1}{a_\theta}\sqrt{\biggl(\frac{2x}{\sin\theta\cos\theta} - \frac{\sqrt 2}{\cos\theta}\biggr)^2 + 2a_\theta},
    \end{equation*}
    where $a_\theta = \csc^2(\theta) - \sec^2(\theta)$ and $b_\theta = \csc^2(\theta) + \sec^2(\theta).$\\
\end{itemize}

\begin{figure}
    \centering
    \includegraphics[width=\linewidth]{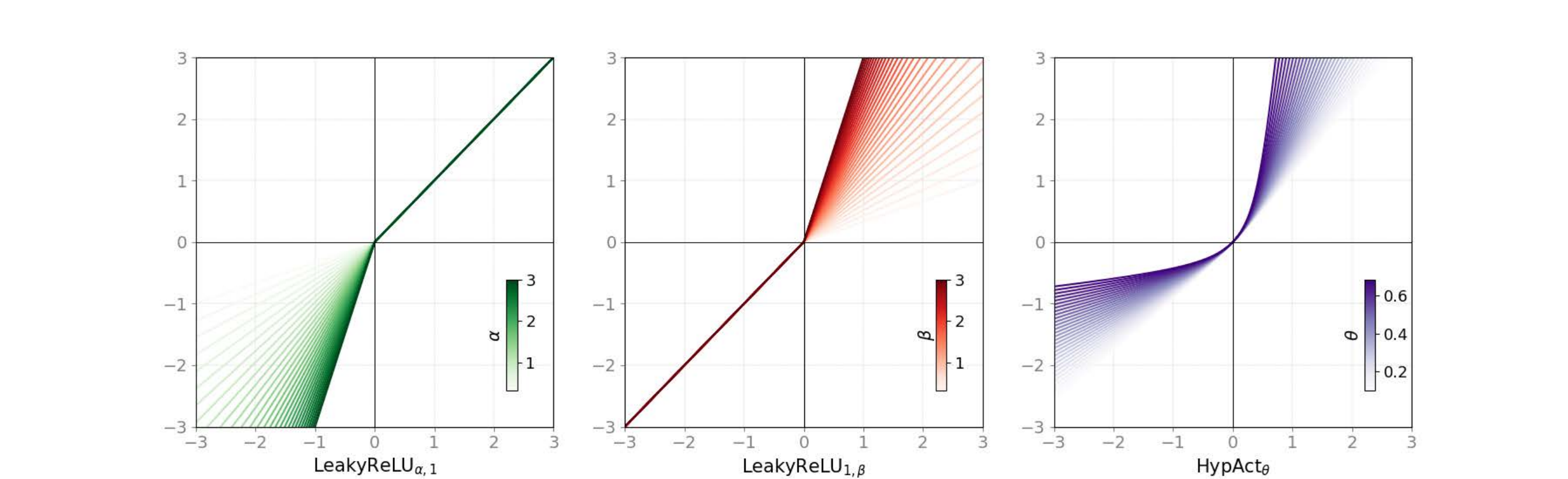}
    \caption{Visualization of the $(\alpha,\beta)$-LeakyReLU and $\theta$-Hyperbolic activations for different values of their hyperparameters.}
    \label{fig:activations}
\end{figure}

We refer the reader to Fig.~\ref{fig:activations} for a visualization of how the hyperparameters $\alpha,\beta$ and $\theta$ affect the two activations.
It is worth mentioning that each activation function comes with its own strenghts and limitations. The $(\alpha,\beta)$-LeakyReLU activation, for instance, has the advantage of providing a direct control over the Lipschitz constant of the nonlinearity used for encoding and decoding, respectively. In fact, if $\act=\operatorname{LeakyReLU}_{\alpha,\beta}$, then $\lip{\act}=\max\{\alpha,\beta\}$ and $\lip{\invact}=\max\{1/\alpha,1/\beta\}$. This nonlinearity, however, is not as smooth as the $\theta$-Hyperbolic activation, which, in contrast, has the benefit of being infinitely differentiable. In turn, the latter does not distinguish as much between encoding and decoding, since
$\act = \operatorname{HypAct}_{\theta}\;\implies\;\lip{\act}=\lip{\invact}=\tan\left(\theta+\frac{\pi}{4}\right).$

\newcommand{\SAE}{\textnormal{\textsf{SAE}}_\act(n_0,\dots,n_l)}
\newcommand{\SBAE}{\textnormal{\textsf{SBAE}}_\act(n_0,\dots,n_l)}
\newcommand{\SOAE}{\textnormal{\textsf{SOAE}}_\act(n_0,\dots,n_l)}

\newcommand{\SAEz}{\textnormal{\textsf{SAE}}_\act}
\newcommand{\SBAEz}{\textnormal{\textsf{SBAE}}_\act}
\newcommand{\SOAEz}{\textnormal{\textsf{SOAE}}_\act}

\subsection{Classes of symmetric autoencoders} Given a decreasing sequence of integers, $\ndim > n_1\ge\dots\ge n_l>0$, and a bilipschitz map $\act:\mathbb{R}\to\mathbb{R}$, we shall write
$$\SAE=\prod_{j=1}^{n_l}\left[\mathbb{R}^{n_j\times n_{j-1}}\times \mathbb{R}^{n_{j-1}\times n_{j}} \times \mathbb{R}^{n_j}\times \mathbb{R}^{n_{j-1}}\times\{\act\}\right],$$
for the space of all symmetric autoencoders associated with the given architecture.

We also introduce two additional subclasses, obtained by requiring the above networks to satisfy additional constraints related to their weights and biases. Specifically, we consider the subclass of \emph{symmetric biorthogonal autoencoders}, already introduced in \cite{otto2023learning},
\begin{align*}
    \SBAE:=\{\autoencoder\in\SAE\;:&\;\bVE_j\bVD_j=\mathbf{I}_{n_j}\;\;\text{and}\\&\;\bVE_j\biasD_j=-\biasE_j\;\;\forall j=1,\dots,n_l\},
\end{align*}
and the one of \emph{symmetric orthogonal autoencoders},
$$\SOAE:=\{\autoencoder\in\SBAE\;:\;\bVE_j=\bVD_j^\top\}.$$
By definition,
$\SOAE\subseteq \SBAE\subseteq \SAE.$ Notice that, as one moves from the broader to the tighter class of autoencoders, the number of architecture parameters decreases. For instance, in the biorthogonal case, the encoding bias vectors $\biasE_j$ are univocally determined by $\bVE_j$ and $\biasD_j$. Similarly, in the orthogonal case, knowing the encoding matrix is enough to derive the decoding matrix and vice versa. 
In light of this, we shall use the following short-hand notation:
\begin{itemize}[itemsep=6pt, before=\vspace{4pt}, after=\vspace{4pt}]
    \item if $\autoencoder\in\SBAE$ we shall write $\{(\bVE_j,\bVD_j,\bias_j,\act)\}_{j=1}^{n_l}$ to intend the symmetric autoencoder $\{(\bVE_j,\bVD_j,-\bVE_j\bias_j,\bias_j,\act)\}_{j=1}^{n_l}$;
    \item if $\autoencoder\in\SOAE$ we shall write $\{(\bV_j,\bias_j,\act)\}_{j=1}^{n_l}$ to intend the symmetric autoencoder $\{(\bV_j^\top,\bV_j,-\bV_j^\top\bias_j,\bias_j,\act)\}_{j=1}^{n_l}$;
\end{itemize}
With this notation, the encoding layers read $e_j:\mathbf{h}\mapsto\act(\bVE_j(\mathbf{h}-\biasE_j))$ in the biorthogonal case, and $e_j:\mathbf{h}\mapsto\act(\bV_j^\top(\mathbf{h}-\biasE_j))$ in the orthogonal case. Notice, in particular, how the encoding formula used in the orthogonal case finds a natural connection to the one adopted in the context of linear reduction: cf. Eq. \eqref{eq:linear-encoding-decoding}. Similar considerations hold for the decoding phase as well. 

\sloppy{
Notwithstanding, it is worth emphasizing that the constraints fulfilled by $\SBAE$ and $\SOAE$, respectively,  give the two classes several remarkable properties. In particular, one can show that all biorthogonal symmetric autoencoders are representation consistent: a given object and its reconstruction both map onto the same latent representation. As an immediate  consequence, all $\autoencoder\in\SBAE$ act as nonlinear projectors.
Orthogonal symmetric autoencoders, in turn, have the additional benefit of providing a direct control over the Lipschitz constants of the encoder and the decoder modules, which can be extremely useful when model stability becomes a necessity. These facts are made rigorous in the Proposition below.
}
\begin{proposition}
    \label{prop:classes}
    Let $\ndim > n_1\ge\dots\ge n_l$ be positive integers and let $\act:\mathbb{R}\to\mathbb{R}$ be bilipschitz. The following hold true.

    \begin{itemize}[before=\vspace{5pt}, itemsep = 2pt, leftmargin=18pt]
        \item[(i)] $\autoencoder\in\SBAE\Rightarrow \mathscr{E}(\autoencoder,\mathscr{D}(\autoencoder,\mathbf{c}))=\mathbf{c}$ for all $\mathbf{c}\in\mathbb{R}^{n_l}.$

        \item[(ii)] $\autoencoder\in\SBAE\Rightarrow \mathscr{R}(\autoencoder,\mathbf{v})=\mathbf{v}$ for all $\mathbf{v}\in\mathscr{R}(\autoencoder,\mathbb{R}^{n_0}).$

        \item[(iii)] $\autoencoder\in\SOAE \Rightarrow\lip{\mathscr{E}(\autoencoder, \cdot)}\le \lip{\act}^l,\;\;\lip{\mathscr{D}(\autoencoder, \cdot)}\le \lip{\invact}^l$.
    \end{itemize}
\end{proposition}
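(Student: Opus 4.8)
The plan is to reduce all three statements to elementary, layer‑by‑layer facts, after which everything telescopes.

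For \emph{(i)}, the crucial observation I would isolate first is that, under the biorthogonality constraints, each encoding layer is a left inverse of the corresponding decoding layer: namely $e_j\circ d_j=\mathrm{id}_{\mathbb{R}^{n_j}}$ for every $j$. This is a direct computation: substituting $d_j(\mathbf{h})=\bVD_j\invact(\mathbf{h})+\biasD_j$ into $e_j(\mathbf{z})=\act(\bVE_j\mathbf{z}+\biasE_j)$ gives $e_j(d_j(\mathbf{h}))=\act\!\big(\bVE_j\bVD_j\invact(\mathbf{h})+\bVE_j\biasD_j+\biasE_j\big)$, and the constraints $\bVE_j\bVD_j=\mathbf{I}_{n_j}$, $\bVE_j\biasD_j=-\biasE_j$ collapse the argument to $\invact(\mathbf{h})$; since $\act$ is a bijection with inverse $\invact$ by Lemma~\ref{lemma:bilipschitz}, this equals $\mathbf{h}$. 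I would then conclude by writing $\mathscr{E}(\autoencoder,\mathscr{D}(\autoencoder,\mathbf{c}))=e_l\circ\cdots\circ e_1\circ d_1\circ\cdots\circ d_l(\mathbf{c})$ and peeling off the central pair $e_k\circ d_k$ repeatedly (a finite downward induction on the number of surviving layers), which leaves exactly $\mathbf{c}$.

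Statement \emph{(ii)} is then immediate from \emph{(i)}: any $\mathbf{v}\in\mathscr{R}(\autoencoder,\mathbb{R}^{n_0})$ can be written as $\mathbf{v}=\mathscr{D}(\autoencoder,\mathbf{c})$ with $\mathbf{c}=\mathscr{E}(\autoencoder,\mathbf{u})$ for some $\mathbf{u}$, whence $\mathscr{R}(\autoencoder,\mathbf{v})=\mathscr{D}\big(\autoencoder,\mathscr{E}(\autoencoder,\mathscr{D}(\autoencoder,\mathbf{c}))\big)=\mathscr{D}(\autoencoder,\mathbf{c})=\mathbf{v}$. For \emph{(iii)}, I would argue layer by layer, using that the composition of Lipschitz maps has Lipschitz constant at most the product of the factors', and that a scalar $L$‑Lipschitz function applied componentwise is $L$‑Lipschitz for the Euclidean norm, since $\|\act(\mathbf{x})-\act(\mathbf{y})\|^2=\sum_i|\act(x_i)-\act(y_i)|^2\le\lip{\act}^2\|\mathbf{x}-\mathbf{y}\|^2$. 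In the orthogonal case $e_j(\mathbf{h})=\act(\bV_j^\top(\mathbf{h}-\biasE_j))$ with $\bV_j\in\mathscr{O}_{n_{j-1},n_j}$, so the affine part is $1$‑Lipschitz ($\|\bV_j^\top\|_{\mathrm{op}}=1$) and hence $\lip{e_j}\le\lip{\act}$; composing the $l$ encoding layers yields $\lip{\mathscr{E}(\autoencoder,\cdot)}\le\lip{\act}^l$. Symmetrically, $d_j(\mathbf{h})=\bV_j\invact(\mathbf{h})+\biasD_j$ with $\bV_j$ an isometry ($\|\bV_j\mathbf{x}\|=\|\mathbf{x}\|$ because $\bV_j^\top\bV_j=\mathbf{I}$), so $\lip{d_j}\le\lip{\invact}$ and $\lip{\mathscr{D}(\autoencoder,\cdot)}\le\lip{\invact}^l$.

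I do not expect a genuine obstacle: the statements are structural. The only points demanding a little care are the bookkeeping of layer indices and the direction of composition in the telescoping argument for \emph{(i)}, and spelling out the componentwise‑Lipschitz fact above so that the nonlinearity does not inflate the Euclidean Lipschitz constant — both entirely routine.
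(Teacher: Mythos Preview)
Your proposal is correct and follows essentially the same route as the paper: for \emph{(i)} you verify $e_j\circ d_j=\mathrm{id}$ layerwise from the biorthogonality constraints and telescope, for \emph{(ii)} you deduce idempotence of $\mathscr{R}$ directly from \emph{(i)}, and for \emph{(iii)} you bound each layer's Lipschitz constant using that the orthonormal affine part is $1$-Lipschitz and then multiply. The only difference is cosmetic---you spell out the componentwise Lipschitz argument for $\act$ explicitly, which the paper leaves implicit.
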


\begin{proof}
    See Appendix~\ref{appendix:proofs}.
\end{proof}

\subsection{Training of symmetric autoencoders}
\label{subsec:training}
Given some $\bu\in L^2(\mathscr{P};\mathbb{R}^{n_0})$, and a corresponding training set $\bu_1,\dots,\bu_{S}\in\mathbb{R}^{n_0}$, consisting of $S$ independent random realizations of $\bu$, the practical implementation and training of a symmetric autoencoder consists in: \emph{(i)} choosing a suitable architecture, thus identifying a proper hypothesis class $\hypclass\subseteq\SAE$; \emph{(ii)} training the model by minimizing the empirical loss
\vspace{-0.2cm}
\begin{equation}
\label{eq:loss}
\mathscr{L}(\boldsymbol{\Psi})=\frac{1}{S}\sum_{i=1}^{S}\|\bu_i-\mathscr{R}(\boldsymbol{\Psi}, \bu_i)\|^2.
\vspace{-0.2cm}\end{equation}
In the case of unconstrained symmetric autoencoders, $\hypclass=\SAE$, such minimization problem can be addressed using classical optimization routines. Conversely, things become slightly more involved if we consider architectures with (bi-)orthogonality constraints, namely, if either $\hypclass=\SBAE$ or $\hypclass=\SOAE$. In those cases, in fact, one must guarantee that the optimization procedure does not exit the hypothesis manifold $\hypclass$, meaning the autoencoder must satisfy the (bi-)orthogonality constraints throughout the entire training phase.

One way to overcome this difficulty, and to make ensure that all orthogonality constrains are fulfilled \emph{by design}, is to introduce a suitable parametrization. To this end, let
$\orthparam: \mathbf{A} \mapsto \tilde{\mathbf{A}},$
be any (differentiable) algorithm that, given a rectangular matrix $\mathbf{A}\in\mathbb{R}^{m\times n}$ with $m\ge n$, returns an orthonormal matrix  $\tilde{\mathbf{A}}\in\mathscr{O}_{m,n}$ such that $\text{span}(\mathbf{A})\subseteq\text{span}(\tilde{\mathbf{A}})$, with equality when $\mathbf{A}$ has full rank. An example of such algorithm is the one used in the \texttt{pytorch} library \cite{pytorch}, which is based on the use of Householder reflections. 

\paragraph{SOAE}
Given $\act$ and $n_0,\dots,n_l$, let $\Theta:=\prod_{j=1}^{l}\left[\mathbb{R}^{n_{j-1}\times n_j}\times \mathbb{R}^{n_{j-1}}\right]$ denote our parametrization space. For any $\boldsymbol{\theta}=\{(\mathbf{A}_j,\mathbf{b}_j)\}_{j=1}^l\in\Theta$, we denote by $\boldsymbol{\Psi}_{\boldsymbol{\theta}}$ the symmetric orthogonal autoencoder obtained by setting $\boldsymbol{\Psi}_{\boldsymbol{\theta}}:=\{(\orthparam(\mathbf{A}_j),\mathbf{b}_j,\act)\}_{j=1}^l.$ It is straightforward to see that such parametrization is surjective, meaning that any $\boldsymbol{\Psi}\in\SOAE$ can be written as $\boldsymbol{\Psi}=\boldsymbol{\Psi}_{\boldsymbol{\theta}}$ for some $\boldsymbol{\theta}\in\Theta$. This allows us to reframe the learning problem in terms of the underlying parametrization, namely, using
$$\tilde{\mathscr{L}}(\boldsymbol{\theta}):=\mathscr{L}(\boldsymbol{\Psi}_{\boldsymbol{\theta}}).$$
The training phase will then consist in finding the optimal $\boldsymbol{\theta}_*\in\Theta$ that minimizes $\tilde{\mathscr{L}}$, and then assembling the corresponding autoencoder.

\paragraph{SBAE} A similar procedure can be carried out in the biorthogonal case, althought the parametrization is significantly more involved. For $q\ge r$, one can construct two matrices $\bVE\in\mathbb{R}^{r\times q}$ and $\bVD\in\mathbb{R}^{q\times r}$ satisfying $\mathbf{E}\mathbf{D}=\mathbf{I}_{r}$ by letting
\begin{equation}
\label{eq:biorthogonal-param}
\bVE^\top:=\mathbf{X}\left[\begin{array}{c}
     \mathbf{Y}\mathbf{S}\mathbf{Z}^\top\\
     \mathbf{0}
\end{array}\right]\quad\quad\text{and}\quad\quad\bVD:=\mathbf{X}\left[\begin{array}{c}
     \mathbf{Y}\mathbf{S}^{-1}\mathbf{Z}^\top\\
     \mathbf{Q}
\end{array}\right],
\end{equation}
where, having set $d:=\min\{r, q-r\}$, the matrices $\mathbf{X}\in\mathscr{O}_{q,r+d}$ and $\mathbf{Y},\mathbf{Z}\in\mathscr{O}_{r,r}$ are orthonormal, whereas $\mathbf{0}\in\mathbb{R}^{d\times r}$ is the zero matrix, $\mathbf{Q}\in\mathbb{R}^{d\times r}$ is arbitrary and $\mathbf{S}\in\mathbb{R}^{r\times r}$ is a diagonal matrix with strictly positive entries over the diagonal. Notably, this construction is surjective, in that one can easily show that any two biorthogonal matrices can be written as in \eqref{eq:biorthogonal-param}. Additionally, compared to other parametrizations seen in the literature, see, e.g., \cite{otto2023learning}, the one proposed here is fairly inexpensive as it only requires inverting a diagonal matrix with full rank.

In light of this fact, given $\act$ and $n_0,\dots,n_l$, we define the optimization space underlying a symmetric biorthogonal autoencoder architecture as $$\Theta:=\prod_{j=1}^{l}\left[\mathbb{R}^{n_{j-1}\times(n_j+d_j)}\times 
\mathbb{R}^{n_{j}\times n_j}\times \mathbb{R}^{n_{j}\times n_j}\times
\mathbb{R}^{d_{j}\times n_j}\times\mathbb{R}^{n_j}\times\mathbb{R}^{n_{j-1}}\right],$$
where $d_j:=\min\{n_j, n_{j-1}-n_j\}$. For any $\boldsymbol{\theta}=\{(\tilde{\mathbf{X}}_j,\tilde{\mathbf{Y}}_j, \tilde{\mathbf{Z}}_j, \mathbf{Q}_j,\mathbf{s}_j,\mathbf{b}_j)\}_{j=1}^{l}\in\Theta$ the corresponding autoencoder $\boldsymbol{\Psi}_{\boldsymbol{\theta}}\in\SBAE$ can be constructed as
$$\boldsymbol{\Psi}_{\boldsymbol{\theta}}=\{(\bVE_j,\bVD_j,\bias_j,\act)\}_{j=1}^l,$$
where each pair $(\bVE_j,\bVD_j)$ is assembled via \eqref{eq:biorthogonal-param} using the matrices $\mathbf{X}_j:=\orthparam(\tilde{\mathbf{X}}_j)$, $\mathbf{Y}_j:=\orthparam(\tilde{\mathbf{Y}}_j)$, $\mathbf{Z}_j:=\orthparam(\tilde{\mathbf{Z}}_j)$, $\mathbf{Q}_j$ and $\mathbf{S}_j:=\text{diag}(\mathbf{s}_j^2)$, where squaring is intended componentwise. Now, as in the orthogonal case, the optimization can be carried out in the unconstrained optimization space $\Theta$ rather than in $\SBAE$.

\begin{remark}
    Both in the case of orthogonal and biorthogonal constraints, the proposed parametrizations are surjective, thus ensuring a proper exploration of the hypothesis class during the training phase. Additionally, this approach guarantees that the constraints are satisfied \emph{by design}. This is in contrast to other strategies adopted in the literature, where the constraints are only \emph{weakly} imposed. For example, in \cite{otto2023learning}, the authors employ biorthogonal architectures, but the training is conducted over a broader hypothesis space, $\SBAE \subset \hypclass \subset \SAE$, with the biorthogonality requirement only being enforced indirectly by including an additional penalty term in the loss function \eqref{eq:loss}.  
\end{remark}

\section{Insights from the Eckart-Young-Schmidt perspective}
\label{sec:insights}
In this Section we shall derive some quantitative error bounds on the reconstruction error of symmetric autoencoders and use those to develop a new initialization strategy. We start by framing everything in an abstract setting, but later transition to a more practical perspective, focusing on the empirical case in which the autoencoder has to be constructed based on a finite amount of data.

\subsection{Theoretical considerations}
\label{subsec:theory}
Let $\bu\in L^{2}(\mathscr{P};\mathbb{R}^{n_0})$. Given $0<n_l\le\dots\le n_1\le n_0$ and $\act:\mathbb{R}\to\mathbb{R}$, we are interested in studying the reconstruction error
\begin{equation}
    \label{eq:reconstruction-error}
    \mathbb{E}\|\bu-\mathscr{R}(\autoencoder,\bu)\|^2,
\end{equation}
where $\autoencoder\in\SAE$. To start, we notice that one has the following chain of inequalities,
\begin{align}
\label{eq:ineq-chain}
\inf_{\autoencoder\in\SOAEz}\;
\mathbb{E}\|\bu-\mathscr{R}(\autoencoder,\bu)\|^2&\ge
\inf_{\autoencoder\in\SBAEz}\;
\mathbb{E}\|\bu-\mathscr{R}(\autoencoder,\bu)\|^2\\\nonumber&\ge
\inf_{\autoencoder\in\SAEz}\;
\mathbb{E}\|\bu-\mathscr{R}(\autoencoder,\bu)\|^2\ge \sum_{i=n_1+1}^{n_0}\lambda_i(\cmatrix[\bu]),
\end{align}
where we have dropped the dependency on $(n_0,\dots,n_l)$ to enhance readability.
The first two inequalities are obvious, as they stem from the fact that the autoencoder classes $\SAEz$, $\SBAEz$ and $\SOAEz$ are nested. The last inequality, instead, follows trivially by noting that, for each fixed $\autoencoder=\{(\bVE_j,\bVD_j,\biasE_j,\biasD_j,\act)\}_{j=1}^{l}\in\SAE$, the outputs of $\mathscr{R}(\autoencoder,\cdot)$ are contained in the $n_1$-dimensional affine subspace
$$V_{\autoencoder}:=\{\bVD_1\mathbf{c}+\biasD_1\;:\;\mathbf{c}\in\mathbb{R}^{n_1}\}\subset\mathbb{R}^{n_0}.$$
Then, as we discussed in Section~\ref{sec:linear-dim-reduction}, this immediately implies 
$\mathbb{E}\|\bu-\mathscr{R}(\Psi,\bu)\|^2\ge\mathbb{E}\left[\inf_{\mathbf{v}\in V_{\autoencoder}}\|\bu-\mathbf{v}\|^2\right]\ge \sum_{i=n_1+1}^{n_0}\lambda_i(\cmatrix[\bu])$, and thus \eqref{eq:ineq-chain}.

This highlights the fact that the first hidden dimension, $n_1$, plays a crucial role, as it directly affects the overall expressivity of the architecture. Since \eqref{eq:ineq-chain} already provides a lower-bound to \eqref{eq:reconstruction-error}, our purpose now is to gain some additional insights by deriving a suitable upper-bound. To this end, leveraging \eqref{eq:ineq-chain}, we shall focus for a moment on the smaller class of symmetric orthogonal autoencoders.

In this respect, we start by stating a very useful lemma, upon which the rest of our derivation is based.

\begin{lemma}
\label{lemma:deep}
Let $\boldsymbol{\Psi}=\{(\bV_j,\bias_j,\act)\}_{j=1}^{n_l}\in \SOAE$, where
$\ndim\ge n_{1}\ge n_2\ge\dots\ge n_{l}$ for some $l\ge2$, and $\act:\mathbb{R}\to\mathbb{R}$ is bilipschitz. Let $e_j=e_j(\autoencoder)$ and $d_j:=d_j(\autoencoder)$. Define $E_k := e_k \circ \ldots \circ e_1$ and $D_{k} := d_k \circ \ldots \circ d_l$. Let $E_0:=\mathsf{Id}_{\ndim}$ and $D_{l+1}:=\mathsf{Id}_{n_l}$ act as the identity. Then, for all $k=1,\ldots,l$, one has
\begin{align}
    \| E_{k-1}(\bv) - D_{k}(E_l(\bv))\|^2 \ge &\;\| E_{k-1}(\bv) - [\bV_{k} \bV_{k}^\top (E_{k-1}(\bv)-\bias_k)+\bias_k]\|^2\\&+\lip{\act}^{-2} \| E_{k}(\bv) - D_{k+1}(E_l(\bv))\|^2,\nonumber\\
    \| E_{k-1}(\bv) - D_{k}(E_l(\bv))\|^2 \le &\;\| E_{k-1}(\bv) - [\bV_{k} \bV_{k}^\top (E_{k-1}(\bv)-\bias_k)+\bias_k]\|^2\\&+\lip{\invact}^{2} \| E_{k}(\bv) - D_{k+1}(E_l(\bv))\|^2,\nonumber
\end{align}
for all $\bv\in\mathbb{R}^{\ndim}$.
\end{lemma}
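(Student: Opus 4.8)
The plan is to prove the two inequalities by unwinding a single step of the decoder and isolating the action of the $k$-th layer. Fix $\bv\in\mathbb{R}^{\ndim}$ and write $\mathbf{h}:=E_{k-1}(\bv)\in\mathbb{R}^{n_{k-1}}$ and $\mathbf{c}:=E_l(\bv)\in\mathbb{R}^{n_l}$. By definition $D_k(E_l(\bv)) = d_k(D_{k+1}(E_l(\bv)))$, and since $\autoencoder\in\SOAE$ the decoder layer acts as $d_k(\mathbf{z}) = \bV_k\invact(\mathbf{z}) + \biasD_k$ with $\biasD_k = -\bV_k^\top\biasE_k$, hence $d_k(\mathbf{z}) = \bV_k(\invact(\mathbf{z}) - \bV_k^\top\biasE_k)$. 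Abbreviating $\mathbf{z}:=D_{k+1}(E_l(\bv))\in\mathbb{R}^{n_k}$, the left-hand side of both inequalities is $\|\mathbf{h} - d_k(\mathbf{z})\|^2 = \|\mathbf{h} - \bV_k(\invact(\mathbf{z}) - \bV_k^\top\biasE_k)\|^2$.

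The key step is an orthogonal (Pythagorean) decomposition of this quantity. Since $\bV_k\in\mathscr{O}_{n_{k-1},n_k}$, the vector $\bV_k\bV_k^\top(\mathbf{h} - \biasE_k) + \biasE_k$ is the point of the affine subspace $\{\bV_k\mathbf{y} + \biasE_k : \mathbf{y}\in\mathbb{R}^{n_k}\}$ closest to $\mathbf{h}$, and $d_k(\mathbf{z})$ lies in that same subspace (with $\mathbf{y} = \invact(\mathbf{z}) - \bV_k^\top\biasE_k$). Hence
\begin{align*}
\|\mathbf{h} - d_k(\mathbf{z})\|^2 &= \|\mathbf{h} - [\bV_k\bV_k^\top(\mathbf{h}-\biasE_k)+\biasE_k]\|^2 + \|[\bV_k\bV_k^\top(\mathbf{h}-\biasE_k)+\biasE_k] - d_k(\mathbf{z})\|^2.
\end{align*}
The first term is exactly the projection term appearing in the statement (recalling $\biasE_k = \bias_k$ in the $\SOAE$ shorthand). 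For the second term, both points lie in the range of the isometry $\mathbf{y}\mapsto\bV_k\mathbf{y}+\biasE_k$, so $\|\cdot\|$ reduces to the $\mathbb{R}^{n_k}$-norm of the difference of the corresponding $\mathbf{y}$'s, giving $\|\bV_k^\top(\mathbf{h}-\biasE_k) - (\invact(\mathbf{z}) - \bV_k^\top\biasE_k)\|^2 = \|\bV_k^\top\mathbf{h} - \invact(\mathbf{z})\|^2$.

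It remains to relate $\|\bV_k^\top\mathbf{h} - \invact(\mathbf{z})\|^2$ to $\|E_k(\bv) - D_{k+1}(E_l(\bv))\|^2 = \|\act(\bV_k^\top\mathbf{h} + \biasE_k) - \mathbf{z}\|^2$ — note $E_k(\bv) = e_k(\mathbf{h}) = \act(\bV_k^\top(\mathbf{h}-\biasE_k))$ in the shorthand; I will need to be careful about which convention for $e_k$ and $\biasE_k$ the statement uses and keep it consistent, substituting $\mathbf{z} = \invact(\invact^{-1}(\mathbf{z}))$ and applying the bilipschitz bounds \eqref{eq:bilipschitz} componentwise. Concretely, applying the lower bound $\eta|x-y|\le|\act(x)-\act(y)|$ with $\eta = 1/\lip{\invact}$ coordinatewise to $x$ the $i$-th entry of $\bV_k^\top\mathbf{h}$ (adjusted by the bias) and $y = \invact(z_i)$ yields $\|\bV_k^\top\mathbf{h} - \invact(\mathbf{z})\|^2 \le \lip{\invact}^2\|\act(\bV_k^\top\mathbf{h}+\ldots) - \mathbf{z}\|^2$, which is the upper inequality; applying instead $|\act(x)-\act(y)|\le L|x-y|$ with $L = \lip{\act}$ gives $\|\bV_k^\top\mathbf{h} - \invact(\mathbf{z})\|^2 \ge \lip{\act}^{-2}\|\act(\ldots) - \mathbf{z}\|^2$, the lower inequality. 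Combining with the Pythagorean identity finishes both. The main obstacle — really a bookkeeping hazard rather than a deep difficulty — is tracking the bias conventions and the encoder layer definition ($e_j(\mathbf{h}) = \act(\bV_j^\top(\mathbf{h}-\bias_j))$ versus $\act(\bV_j^\top\mathbf{h}+\biasE_j)$) through the substitution, so that the projection term and the argument of $\act$ line up exactly as written; once the Pythagorean split is set up, the activation estimate is a one-line componentwise application of Lemma~\ref{lemma:bilipschitz}.
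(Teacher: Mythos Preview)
Your approach is essentially the paper's: the Pythagorean split via the orthogonal projector $\bV_k\bV_k^\top$, the isometric reduction of the second summand to $\|\bV_k^\top(\mathbf{h}-\bias_k) - \invact(\mathbf{z})\|$, and the componentwise bilipschitz estimate are exactly the three moves in the original proof (the paper writes the last step as a bound on $\invact$ rather than on $\act$ with $y=\invact(z_i)$, but these are equivalent). The one concrete slip to fix is the bias bookkeeping you yourself flagged: in the $\SOAE$ shorthand one has $\bias_k = \biasD_k\in\mathbb{R}^{n_{k-1}}$ and $\biasE_k = -\bV_k^\top\bias_k\in\mathbb{R}^{n_k}$, so your relation ``$\biasD_k = -\bV_k^\top\biasE_k$'' is inverted (and dimensionally inconsistent), and the affine subspace and projection should read $\{\bV_k\mathbf{y}+\bias_k\}$ and $\bV_k\bV_k^\top(\mathbf{h}-\bias_k)+\bias_k$ throughout---once that is corrected everything lines up.
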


\begin{proof}
    Notice that for all $\bv\in\mathbb{R}^{\ndim}$
\begin{align}
\label{eq:addenda-proof-lemma}
\nonumber
\| E_{k-1}(\bv) - D_{k}(E_l(\bv))\| \overset{def}{=} &\| E_{k-1}(\bv) - d_{k} (D_{k+1}(E_l(\bv)))\|^2 \\\nonumber
\overset{def}{=} &\| E_{k-1}(\bv) - \bV_{k}  \act^{-1} (D_{k+1}(E_l(\bv)))-\bias_k\|^2 \\\nonumber
\overset{orth}{=} &\| [E_{k-1}(\bv)-\bias_k] - \bV_{k}  \bV_{k}^{\top}[E_{k-1}(\bv)-\bias_k]\|^2 \\
&\hspace{0.cm} + \| \bV_{k}  \bV_{k}^{\top} [E_{k-1}(\bv)-\bias_k] - \bV_{k}  \act^{-1} (D_{k+1}(E_l(\bv)))\|^2,      
\end{align}
the first two equalities following by definition and the last one by orthogonality. Notice, in fact, that both $\bV_{k}  \bV_{k}^{\top}[E_{k-1}(\bv)-\bias_k]$ and $\bV_{k}  \act^{-1} (D_{k+1}(E_l(\bv)))$ belong to $\text{span}(\bV_k)$, to which $[E_{k-1}(\bv)-\bias_k] - \bV_{k}  \bV_{k}^{\top}[E_{k-1}(\bv)-\bias_k]$, the projection residual, is orthogonal. We now observe that the second term at the end of Eq. \eqref{eq:addenda-proof-lemma} can be re-written as
\begin{equation*}
\begin{aligned}
    &\| \bV_{k}  \bV_{k}^{\top} [E_{k-1}(\bv) -\bias_k]- \bV_{k}  \act^{-1}(D_{k+1}(E_l(\bv)))\|^2  = \\
    &\hspace{2cm} =\| \bV_{k}^{\top}[E_{k-1}(\bv)-\bias_k] -   \act^{-1}(D_{k+1}(E_l(\bv)))\|^2 \\
    & \hspace{2cm} = \| \invact(\act(\bV_{k}^{\top}[E_{k-1}(\bv)-\bias_k])) -   \act^{-1}(D_{k+1}(E_l(\bv)))\|^2\\
    & \hspace{2cm} = \| \invact (E_{k}(\bv)) - \act^{-1}(D_{k+1}(E_l(\bv)))\|^2.
\end{aligned}
\end{equation*}
Finally, thanks to the bilipschitzness of $\act$, cf. Lemma~\ref{lemma:bilipschitz}, we obtain the chain of inequalities
\begin{equation*}
\begin{aligned}
    \lip{\act}^{-2}\| E_{k}(\bv) - D_{k+1}(E_l(\bv))\|^2 &\le \| \invact (E_{k}(\bv)) - \act^{-1}(D_{k+1}(E_l(\bv)))\|^2 \\ &\le \lip{\invact}^2 \| E_{k}(\bv) - D_{k+1}(E_l(\bv))\|^2.
\end{aligned}
\end{equation*}
The above, combined with \eqref{eq:addenda-proof-lemma}, yields the desired bounds.
\end{proof}

Lemma~\ref{lemma:deep} takes advantage of the compositional structure of deep symmetric orthogonal autoencoders to derive suitable error bounds characterizing the projection error at each level $k=1,\dots,l$.
Leveraging Lemma~\ref{lemma:deep} in an iterative fashion, we can then easily prove the following general result. 

\begin{theorem}
\label{theorem:soae}
Let $\bu\in L^{2}(\mathscr{P};\mathbb{R}^{\ndim}).$
Let $\ndim\ge n_{1}\ge n_2\ge\dots\ge n_{l}$, $l\ge2,$ and let $\act:\mathbb{R}\to\mathbb{R}$ be bilipschitz. For all $\autoencoder=\{(\bV_j,\bias_j,\act)\}_{j=1}^{n_l}\in\SOAE$ one has{\small\begin{equation}
\label{eq:soae-lower}
\mathbb{E}\|\bu-\mathscr{R}(\boldsymbol{\Psi}, \bu)\|^2\ge\sum_{k=0}^{l-1} \lip{\act}^{-2k}  \mathbb{E}\| E_k(\bu) - [\bV_{k+1} \bV_{k+1}^\top (E_k(\bu)-\bias_{k+1})+\bias_{k+1}]\|^2,\vspace{-0.5cm}
\end{equation}}
{\small\begin{equation}
\label{eq:soae-upper}
\mathbb{E}\|\bu-\mathscr{R}(\boldsymbol{\Psi}, \bu)\|^2\le\sum_{k=0}^{l-1} \lip{\invact}^{2k}  \mathbb{E}\| E_k(\bu) - [\bV_{k+1} \bV_{k+1}^\top (E_k(\bu)-\bias_{k+1})+\bias_{k+1}]\|^2,
\end{equation}}
where $E_k:=(e_k\circ\dots\circ e_1)$, being $e_j:=e_j(\autoencoder)$ and $E_0:=\mathsf{Id}_{\ndim}$. 
\end{theorem}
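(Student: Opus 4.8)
The plan is to apply Lemma~\ref{lemma:deep} recursively, peeling off one layer at a time, and then take expectations. Write $R_k(\bv) := \|E_{k-1}(\bv) - D_k(E_l(\bv))\|^2$ for $k = 1, \dots, l+1$, so that, using the conventions $E_0 = \mathsf{Id}_{\ndim}$ and $D_{l+1} = \mathsf{Id}_{n_l}$, we have $R_1(\bv) = \|\bv - \mathscr{R}(\autoencoder, \bv)\|^2$ at one end and $R_{l+1}(\bv) = \|E_l(\bv) - E_l(\bv)\|^2 = 0$ at the other. Also abbreviate the $k$-th projection residual as
\[
P_k(\bv) := \|E_{k-1}(\bv) - [\bV_k \bV_k^\top(E_{k-1}(\bv) - \bias_k) + \bias_k]\|^2.
\]
With this notation, the lower bound of Lemma~\ref{lemma:deep} reads $R_k(\bv) \ge P_k(\bv) + \lip{\act}^{-2} R_{k+1}(\bv)$ and the upper bound reads $R_k(\bv) \le P_k(\bv) + \lip{\invact}^{2} R_{k+1}(\bv)$, valid for every $k = 1, \dots, l$ and every $\bv \in \mathbb{R}^{\ndim}$.

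The first step is to establish the pointwise (in $\bv$) version of both bounds by induction on the number of layers peeled. For the lower bound, one shows by downward induction that $R_1(\bv) \ge \sum_{k=1}^{m} \lip{\act}^{-2(k-1)} P_k(\bv) + \lip{\act}^{-2m} R_{m+1}(\bv)$ for each $m = 1, \dots, l$: the base case $m=1$ is exactly Lemma~\ref{lemma:deep}, and the inductive step substitutes $R_{m+1}(\bv) \ge P_{m+1}(\bv) + \lip{\act}^{-2} R_{m+2}(\bv)$ into the previous line, using $\lip{\act}^{-2m} \cdot \lip{\act}^{-2} = \lip{\act}^{-2(m+1)}$ and noting that the extra nonnegative multiplicative constant in front preserves the inequality. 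Taking $m = l$ and discarding the final nonnegative term $\lip{\act}^{-2l} R_{l+1}(\bv) = 0$ (in fact it vanishes, but nonnegativity alone suffices) gives $R_1(\bv) \ge \sum_{k=1}^{l} \lip{\act}^{-2(k-1)} P_k(\bv)$. Reindexing $k \mapsto k+1$ so the sum runs over $k = 0, \dots, l-1$ matches the statement of \eqref{eq:soae-lower}. The upper bound follows by the identical argument with $\lip{\act}^{-2}$ replaced by $\lip{\invact}^{2}$ and every inequality reversed; here one does use $R_{l+1}(\bv) = 0$ explicitly to drop the tail term, since the coefficient $\lip{\invact}^{2l}$ is positive and we cannot simply discard a nonnegative quantity from an upper bound --- but it is genuinely zero, so this is fine.

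The final step is to integrate over $\mathscr{P}$: since $\bu \in L^2(\mathscr{P}; \mathbb{R}^{\ndim})$ and every map $e_j$, $d_j$, $\invact$ is Lipschitz (Lemma~\ref{lemma:bilipschitz}), each $E_k(\bu)$ and $D_k(E_l(\bu))$ is again Bochner square-integrable, so all the quantities $\mathbb{E}[R_1(\bu)]$, $\mathbb{E}[P_k(\bu)]$ are finite and monotonicity of the expectation transfers the pointwise inequalities to \eqref{eq:soae-lower}--\eqref{eq:soae-upper}. I do not anticipate a genuine obstacle here: the content is entirely in Lemma~\ref{lemma:deep}, and what remains is a clean telescoping induction together with a routine integrability check. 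The one point requiring a little care is bookkeeping of the exponents and the index shift between the recursion variable $k$ (running $1, \dots, l$) and the summation index in the theorem statement (running $0, \dots, l-1$), together with remembering that $\lip{\act} \ge \lip{\invact}^{-1}$ — equivalently $\angle(\act) \ge 0$ — so that $\lip{\act}^{-2k} \le \lip{\invact}^{2k}$, which is implicitly why the lower and upper bounds are consistent with each other, though this consistency need not be invoked in the proof itself.
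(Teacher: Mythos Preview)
Your proof is correct and follows essentially the same approach as the paper: both iterate Lemma~\ref{lemma:deep} to telescope the reconstruction error into a sum of weighted projection residuals, then observe that the terminal term $R_{l+1}$ vanishes because $D_{l+1}$ is the identity. The only cosmetic differences are that you work pointwise before taking expectations and add an explicit integrability remark, whereas the paper applies expectations from the outset; your induction is also forward (increasing $m$), not ``downward'' as you wrote, but this is a harmless wording slip.
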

\begin{proof}
We start by proving the lower bound \eqref{eq:soae-lower}. By Lemma~\ref{lemma:deep},
\begin{multline*}
   \mathbb{E}\|\bu-\mathscr{R}(\boldsymbol{\Psi}, \bu)\|^2 = \mathbb{E}\|E_0(\bu)-D_1(E_l(\bu))\|^2 \ge \mathbb{E}\| E_0(\bu) - [\bV_{1} \bV_{1}^\top (E_0(\bu)-\bias_1)+\bias_1]\|^2\\+\lip{\act}^{-2}\mathbb{E}\|E_1(\bu)-D_2(E_l(\bu))\|^2. 
\end{multline*}
Then, iteratively applying Lemma~\ref{lemma:deep} to the last term at the right-hand-side yields
\begin{align*}
   \mathbb{E}\|\bu-\mathscr{R}(\boldsymbol{\Psi}, \bu)\|^2&\ge\dots\\\dots&\ge\sum_{k=0}^{l-1} \lip{\act}^{-2k} \mathbb{E}\| E_k(\bu) - [\bV_{k+1} \bV_{k+1}^\top (E_k(\bu)-\bias_{k+1})+\bias_{k+1}]\|^2\\&\hspace{1.5cm}+\lip{\act}^{-2l}\mathbb{E}\|E_l(\bu)-D_{l+1}(E_l(\bu))\|^2.
\end{align*}
Notice, however, that the last term is actually zero as $D_{l+1}$ is the identity. This proves \eqref{eq:soae-lower}. Mirroring the above steps, but using the upper bound result in Lemma~\ref{lemma:deep}, yields \eqref{eq:soae-upper} and thus concludes the proof.
\end{proof}

What is most interesting about Theorem~\ref{theorem:soae} is that it bounds the reconstruction error in terms of $l$ terms, one per each layer, with the $k$th 
term connecting the $k$th hidden representation in the architecture, $E_{k}(\bu)$, with the trainable parameters at level $k+1$, $\bV_{k+1}$ and $\bias_{k+1}$. Furthermore, each term in Eqs. \eqref{eq:soae-lower}-\eqref{eq:soae-upper} can be interpreted as a linear reduction, happening at level $k$.

In particular, this suggests minimizing the upper-bound in \eqref{eq:soae-upper} by addressing each term alone, following a sort of greedy approach. These considerations lead to the following Corollary.

\begin{corollary}
\label{corollary:loose-bounds}
Let $\bu\in L^{2}(\mathscr{P};\mathbb{R}^{\ndim})$, $\ndim\ge n_{1}\ge n_2\ge\dots\ge n_{l}$ with $l\ge2,$ and let $\act:\mathbb{R}\to\mathbb{R}$ be bilipschitz. Set $E_0^*:=\mathsf{Id}_{n_0}.$ Starting with $k=0$, 

\begin{itemize}[itemsep = 4pt, before = \vspace{4pt}, after = \vspace{4pt}, leftmargin=15pt]
    \item[---] define $\bias_k^*:=\mathbb{E}[E^*_k(\bu)]$,
    \item[---] construct $\bV_{k+1}^*$ by extracting the $n_{k+1}$ eigenvectors of $\cmatrix[E_k^*(\bu)]$ associated to the $n_{k+1}$ largest eigenvalues,
    \item[---] define $E_{k+1}^*:\mathbb{R}^{n_0}\to\mathbb{R}^{n_{k+1}}$ by setting $E_{k+1}^*(\mathbf{v}):=\act(\bV_{k+1}^*E_k^*(\mathbf{v})+\bias_k^*)$. 
\end{itemize}
Iterate the previous steps for $k=1,\dots,l-1$. Then,
\begin{align}
\label{eq:ineq-chain2}
\inf_{\autoencoder\in\SAEz}\;
\mathbb{E}\|\bu-\mathscr{R}(\autoencoder,\bu)\|^2&\le
\inf_{\autoencoder\in\SBAEz}\;
\mathbb{E}\|\bu-\mathscr{R}(\autoencoder,\bu)\|^2\\\nonumber&\le
\inf_{\autoencoder\in\SOAEz}\;
\mathbb{E}\|\bu-\mathscr{R}(\autoencoder,\bu)\|^2\\\nonumber&\le \sum_{k=0}^{l-1}\lip{\invact}^{2k}\sum_{i=n_{k+1}+1}^{n_k}\lambda_i(\cmatrix[E_k^*(\bu)]).
\end{align}
    
\end{corollary}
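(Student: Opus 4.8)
The plan is to combine the chain of inequalities \eqref{eq:ineq-chain2} — whose first two lines are immediate from the nestedness $\SOAEz\subseteq\SBAEz\subseteq\SAEz$ — with the upper bound \eqref{eq:soae-upper} of Theorem~\ref{theorem:soae}, evaluated at the \emph{specific} autoencoder built by the greedy construction in the statement. So the real content is to verify that the tuple $\autoencoder^*=\{(\bV_{j}^*,\bias_{j-1}^*,\act)\}_{j=1}^{l}$ (with a small index bookkeeping note: the $k$th layer uses encoding bias $\bias_k^*$, which is exactly the convention $e_j:\mathbf{h}\mapsto\act(\bV_j^\top(\mathbf{h}-\biasE_j))$ used for $\SOAEz$) is a legitimate element of $\SOAEz$, and that for this choice the $k$th summand in \eqref{eq:soae-upper} equals $\sum_{i=n_{k+1}+1}^{n_k}\lambda_i(\cmatrix[E_k^*(\bu)])$.

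First I would check membership in $\SOAEz$: each $\bV_{k+1}^*\in\mathscr{O}_{n_k,n_{k+1}}$ by construction (its columns are $n_{k+1}$ orthonormal eigenvectors of the symmetric positive-semidefinite matrix $\cmatrix[E_k^*(\bu)]\in\mathbb{R}^{n_k\times n_k}$), and $\bias_k^*\in\mathbb{R}^{n_k}$; these are exactly the data defining an element of $\SOAEz$ in the shorthand notation of Section~\ref{sec:symmetric}. Next I would identify the hidden representations: by induction the map $E_k^*$ defined in the Corollary coincides with $E_k=e_k\circ\dots\circ e_1$ for $\autoencoder^*$, since $e_{k+1}:\mathbf{h}\mapsto\act(\bV_{k+1}^{*\top}(\mathbf{h}-\bias_k^*))$ — here one must be slightly careful that $\bV_{k+1}^*$ is $n_k\times n_{k+1}$ so its "encoder matrix" is $\bV_{k+1}^{*\top}$, and the formula $E_{k+1}^*(\mathbf{v})=\act(\bV_{k+1}^*E_k^*(\mathbf{v})+\bias_k^*)$ in the statement should be read accordingly (an apparent transpose typo in the Corollary). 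Then the $k$th term on the right of \eqref{eq:soae-upper} is $\lip{\invact}^{2k}\,\mathbb{E}\|E_k^*(\bu)-[\bV_{k+1}^*\bV_{k+1}^{*\top}(E_k^*(\bu)-\bias_k^*)+\bias_k^*]\|^2$, which is precisely the POD reconstruction error at level $k$ for the random vector $E_k^*(\bu)\in L^2(\mathscr{P};\mathbb{R}^{n_k})$ when projecting onto an $n_{k+1}$-dimensional subspace.

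At that point I invoke Theorem~\ref{theorem:pod} with $\bu$ replaced by $E_k^*(\bu)$, $n_0$ by $n_k$, and $n$ by $n_{k+1}$: since $\bias_k^*=\mathbb{E}[E_k^*(\bu)]$ and $\bV_{k+1}^*$ is exactly the matrix of the top $n_{k+1}$ eigenvectors of $\cmatrix[E_k^*(\bu)]$, Theorem~\ref{theorem:pod} gives that this term equals $\sum_{i=n_{k+1}+1}^{r_k}\lambda_i(\cmatrix[E_k^*(\bu)])$ where $r_k=\rank(T_{E_k^*(\bu)})\le n_k$; padding with the zero eigenvalues $\lambda_i(\cmatrix[E_k^*(\bu)])=0$ for $r_k<i\le n_k$ rewrites this as $\sum_{i=n_{k+1}+1}^{n_k}\lambda_i(\cmatrix[E_k^*(\bu)])$, matching the claimed bound. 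Summing over $k=0,\dots,l-1$ and using $\mathbb{E}\|\bu-\mathscr{R}(\autoencoder^*,\bu)\|^2\ge\inf_{\autoencoder\in\SOAEz}\mathbb{E}\|\bu-\mathscr{R}(\autoencoder,\bu)\|^2$ closes the argument.

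The main obstacle — really the only non-mechanical point — is the careful index/transpose bookkeeping needed to align the greedy construction with the hypotheses of Theorem~\ref{theorem:soae} and Theorem~\ref{theorem:pod}: one has to confirm that $E_k^*$ built in the Corollary is genuinely the $E_k$ appearing in \eqref{eq:soae-upper} for the autoencoder $\autoencoder^*$, that the bias convention matches, and that the edge cases ($k=0$, where $E_0^*=\mathsf{Id}$ and the term is the classical POD error for $\bu$ itself; and $r_k<n_k$, handled by zero-padding) are consistent. Everything else is a direct substitution, so I would keep the written proof short: state the construction gives $\autoencoder^*\in\SOAEz$, apply \eqref{eq:soae-upper}, recognize each term via Theorem~\ref{theorem:pod}, and conclude.
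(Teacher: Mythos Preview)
Your proposal is correct and follows exactly the intended route: the paper presents Corollary~\ref{corollary:loose-bounds} without an explicit proof, treating it as an immediate consequence of applying the upper bound \eqref{eq:soae-upper} of Theorem~\ref{theorem:soae} to the greedily constructed $\autoencoder^*\in\SOAEz$ and then invoking Theorem~\ref{theorem:pod} layer by layer. Your careful handling of the index/transpose bookkeeping (the shift $\bias_j\leftrightarrow\bias_{j-1}^*$, the apparent transpose typo in the definition of $E_{k+1}^*$, and the zero-padding for $r_k<n_k$) is precisely the content one needs to make the implicit argument rigorous.
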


We highlight that Eqs. \eqref{eq:ineq-chain} and \eqref{eq:ineq-chain2} are complementary, as they provide lower 
and upper bounds for the reconstruction error of the optimal SAE, respectively. Despite being looser than the ones provided in Theorem \ref{theorem:soae}, such bounds have a practical implication on design and initialization of symmetric autoencoders, as we discuss below.

\subsection{From theory to practice: the EYS initialization}
Inspired by the considerations made in Section~\ref{subsec:theory}, particularly by Corollary~\ref{corollary:loose-bounds}, we propose a new initialization strategy for symmetric autoencoders hereon referred to as ``Eckart-Young-Schmidt (EYS) initialization". The idea is to mimic the greedy procedure described in Corollary~\ref{corollary:loose-bounds}, ultimately translating the latter to an empirical framework.

More precisely, let $\bu\in L^{2}(\mathscr{P},\mathbb{R}^{n_0})$, and assume we are given a collection of $S$ i.i.d. random realizations of $\bu$, denoted as $\bu_1,\dots,\bu_S.$ Fix any bilipschitz activation $\act$ and a suitable skeleton $n_0\ge n_1\ge\dots\ge n_l$ for a symmetric autoencoder. The idea is to leverage the data in order to come up with a suitable autoencoder $\autoencoder\in\hypclass\subseteq\SAE$ that constitutes a first initial guess to be used as a starting point for the training phase. As before, $\hypclass$ denotes the hypothesis class, which can be either the whole set of symmetric autoencoders, or may be limited to either orthogonal or biorthogonal architectures. Our proposed strategy, which is independent of $\hypclass$, is detailed in Algorithm~\ref{alg:eys}.

\begin{algorithm}[h!]
    \caption{EYS initialization (pseudo-code)}
    \label{alg:eys}
    \begin{algorithmic}[0]
    \STATE \textbf{Inputs:} 
    snapshot matrix $\mathbf{U} \in \mathbb{R}^{n_0 \times S}$ listing $S$ i.i.d. samples of $\bu$, bilipschitz activation $\act$, architecture skeleton $\{n_j\}_{j=1}^l$
    \vspace{0.15cm}
    \STATE $\mathbf{Z} \gets \mathbf{U}$ \hfill \texttt{\#initialize latent vector}
    \vspace{0.2cm}
    \FOR{$j = 1:l$}
    \STATE $\mathbf{b}_j \gets \overline{\mathbf{Z}}$ \hfill \texttt{\#compute sample mean}\vspace{2pt}
    \STATE $\mathbf{W}_j, \_, \_ = \texttt{SVD}(\mathbf{Z} - \mathbf{b}_j)$ \hfill \texttt{\#obtain left eigenvectors}\vspace{2pt}
    \STATE $\mathbf{V}_j \gets  \mathbf{W}_j[:,:n_j]$ \hfill \texttt{\#retain first $n_j$ vectors}\vspace{2pt}
    \STATE $\mathbf{Z} \gets \act(\mathbf{V}_j^\top(\mathbf{Z} - \mathbf{b}_j))$ \hfill \texttt{\#compute next latent vector}
    \ENDFOR
    \vspace{0.2cm}
    
    \STATE $\Psi_{\textsf{EYS}} \gets \{(\bV_j,\bias_j,\act)\}_{j=1}^{l}\in\SOAE$ \hfill \texttt{\#build autoencoder}
    
    \RETURN $\Psi_{\textsf{EYS}}$ 
    \end{algorithmic}
\end{algorithm} 

We highlight that Algorithm~\ref{alg:eys} follows the same construction seen in Corollary~\ref{corollary:loose-bounds}, but it leverages the empirical probability distribution
$\tilde{\mathbb{P}}:=\frac{1}{S}\sum_{i=1}^{S}\delta_{\bu_i},$
rather than the true law $\mathbb{P}$ of the random vector $\bu\sim\mathbb{P}.$ Here, $\delta_{\bu_i}$ denotes the Dirac delta distribution centered at $\bu_i$. Consequently, expected values appearing in Corollary~\ref{corollary:loose-bounds} are replaced by statistical averages, whereas the eigenvectors of the covariance matrices are approximated via empirical SVD (up to centering). In particular, we notice that the EYS initialization ultimately consists in an iterative application of the SVD, progressively going from $n_0$ to $n_l$.

We also point out that, regardless of the chosen hypothesis class $\hypclass$, the EYS initialization always returns a symmetric orthogonal autoencoder, $\autoencoder_{\textsf{EYS}}\in\SOAE$, coherently with the fact that
$$\SOAE\subseteq\hypclass\subseteq\SAE.$$
However, we also remark that during the training phase the autoencoder will be free to explore the whole hypothesis class, potentially exiting the submanifold of SOAE.

In this concern, it is worth spending a few words in order to clarify how the EYS initialization can be incorporated in the training procedure discussed in Section~\ref{subsec:training}. Let $\autoencoder_{\textsf{EYS}}=\{(\bV_j,\bias_j,\act)\}_{j=1}^{l}$. We distinguish between the three possible choices of the hypothesis class.\vspace{6pt}

\begin{itemize}
    \item[{\em (i)}] $\hypclass=\SAE$. Here, we just set $\bVE_j:=\bV_j^\top$, $\bVD_j=\bV_j$, $\biasE_j=-\bV_j^\top\bias_j,$ and $\biasD_j=\bias_j$. Then, we optimize $\{(\bVE_j,\bVD_j,\biasE_j,\biasD_j\}_j$ freely, as no constraints are involved.\vspace{6pt}

    \item[{\em (ii)}] $\hypclass=\SBAE$. In this case, one must first translate everything according to the parametrization discussed in Section~\ref{subsec:training}, which serves to reframe the training procedure as an unconstrained optimization problem. To this end, we set
    $$\tilde{\mathbf{X}}_j:=[\mathbf{V}_j,\mathbf{M}_j],\quad\tilde{\mathbf{Y}}_j=\tilde{\mathbf{Z}}_j=\mathbf{I}_{n_j},\quad\mathbf{Q}_j=\mathbf{0},\quad\mathbf{s}_j=[1,\dots,1]^\top,$$
    where $\mathbf{I}_{n_j}$ is the $n_j\times n_j$ identity matrix, whereas $\mathbf{M}_j$ is any $n_{j-1}\times d_j$ orthonormal matrix whose span is orthogonal to $\text{span}(\mathbf{V}_j)$. In practice, following the notation in Algorithm~\ref{alg:eys}, this can be constructed by extracting part of the remaning columns in the matrix $\mathbf{W}$, namely, $\mathbf{M}_j=\mathbf{W}_j[:,n_j+1:n_j+d_j]$.

    It is straightforward to prove that with such construction, if we let $$\boldsymbol{\theta}_{\textsf{EYS}}:=\{(\tilde{\mathbf{X}}_j,\tilde{\mathbf{Y}}_j,\tilde{\mathbf{Z}}_j,\mathbf{Q}_j,\mathbf{s}_j,\bias_j)\}_{j=1}^l$$
    then $\autoencoder_{\boldsymbol{\theta}_{\textsf{EYS}}}=\autoencoder_{\textsf{EYS}}$, meaning that the proposed parameter vector is consistent with the proposed initialization. Consequently, the training phase can now be carried out in the optimization space starting from $\boldsymbol{\theta}=\boldsymbol{\theta}_{\textsf{EYS}}$.
    \vspace{6pt}

    \item[{\em (iii)}] $\hypclass=\SOAE$. Here, as in the previous case, one needs to find a suitable $\boldsymbol{\theta}_{\textsf{EYS}}\in\Theta$ in the optimization space that realizes $\autoencoder_{\textsf{EYS}}$ --- this time according to the notation for SOAE architectures: see Section~\ref{subsec:training}. Here, however, this is much easier to do as one can simply set $\boldsymbol{\theta}_{\textsf{EYS}}:=\{(\bV_j,\bias_j)\}_{j=1}^{l}$.\vspace{6pt}
\end{itemize}

\begin{figure}
    \centering
    \includegraphics[width=0.6\linewidth]{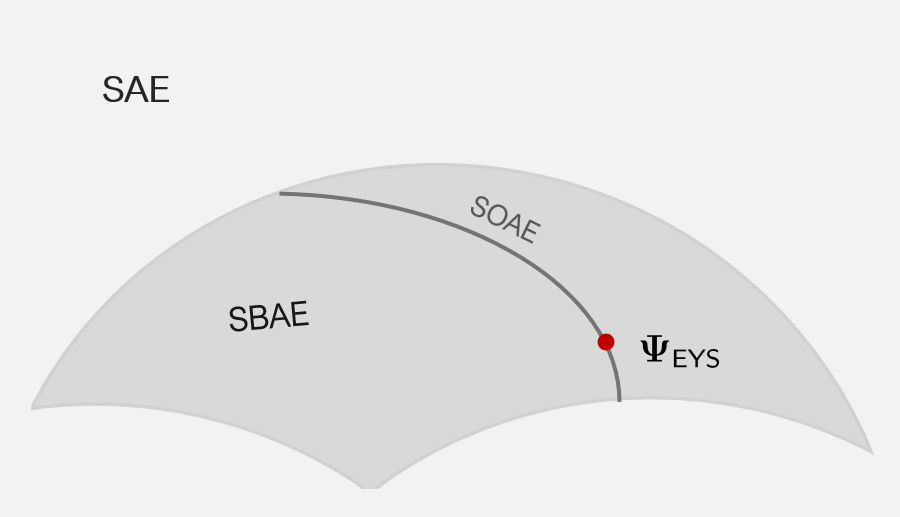}
    \caption{Pictorial representation of the EYS initialization and its relationship with the three manifolds, of decreasing dimension, representing the three (nested) classes of symmetric autoencoder architectures: {\em \textsf{SAE}}, {\em \textsf{SBAE}} and {\em \textsf{SOAE}}.}
    \label{fig:manifold}
\end{figure}

In other words, our proposal is to use the same initialization for all classes of symmetric autoencoders: the choice of the hypothesis class will only impact the architecture during the training phase, as it will restrict the exploration to the corresponding submanifold. We refer to Figure~\ref{fig:manifold} for a pictorial representation. 
Finally, we remark that, even if the EYS initialization has been developed specifically for symmetric autoencoders, in principle, it can also be applied to generic autoencoder architectures. However, the underlying reasoning would then lose its theoretical significance.


\section{Numerical experiments}
\label{sec:numerical-experiments}
Within this section, we provide a comprehensive series of numerical experiments aimed at {\em (i)} assessing the accuracy and computational performance of deep symmetric autoencoders, in their three flavours (\textsf{SAE}, \textsf{SBAE}, \textsf{SOAE}), {\em (ii)} validating the efficacy of our proposed initialization strategy, 
and {\em (iii)} unveiling the effect of design choices on the reconstruction accuracy. The training of the architectures is performed on a NVIDIA A100 80GB GPU. The code implementation is released and available at \texttt{\href{https://github.com/briviosimone/sae_eys}{https://github.com/briviosimone/sae\_eys}}.

\subsection{Datasets description} 

For our analysis, we shall consider data coming from three different case studies. Given the extensive experience of both authors in the field of reduced order modeling for parametrized PDEs, all case studies are formulated within the framework of \emph{discretized functional data}. Specifically, the input dimension $n_0$ arises from the discretization of a spatial domain $\Omega \subset \mathbb{R}^d$, obtained by considering either a uniform grid or an appropriate simplicial mesh. Within this setting, the state vector $\bu\in\mathbb{R}^{\ndim}$ ultimately serves as a discrete representation of some underlying functional object. That is, being $\{x_i\}_{i=1}^{\ndim}$ the nodes used for the spatial discretization, one has $u_i=u(x_i)$ for some $u:\Omega\to\mathbb{R}$. Specifically, in our case, $\bu$ will be the discrete vector representing the solution of some parameter dependent PDE, and the variability of the solution $\bu$ will be obtained by letting the parameters of the PDE vary within a suitable range: see Table~\ref{tab:params} and Fig.~\ref{fig:dataset-variability}.


Consequently, we emphasize that all the data is synthetic and is generated using either analytic formulae or numerical solvers. We exploit this controlled environment to ensure a fair comparison of the different architectures across the different case studies. 
For each numerical experiment, we collect 400 realizations of the state vector $\bu$, generated by repeatedly sampling different values of the problem parameters. Each parameter is sampled independently and uniformly over its respective physical range (see Table~\ref{tab:params}).
After computing the $S_{\text{tot}}=400$ simulations, we split them between training ($50\%$ of the total), validation ($25\%)$ and test set ($25\%)$. To guarantee a fair comparison, we conduct all the experiments following a standardized training routine. Specifically, we employ 1500 epochs (with a patience of 500 for early stopping), using $lr = 10^{-3}$ as learning rate, $B=8$ as batch size, and min-max normalization for all of our experiments. The simulations in the test set, instead, are used to compute the empirical mean-squared error, $\text{MSE}=\frac{1}{S_{\text{test}}}\sum_{j=1}^{S_{\text{test}}}\|\bu_j^{\text{test}}-\mathscr{R}(\autoencoder,\bu_j^{\text{test}})\|^2,$
which serves as a Monte Carlo approximation of $\mathbb{E}\|\bu-\mathscr{R}(\autoencoder,\bu)\|^2.$

We provide a brief description of the three case studies right below.
\vspace{0.2cm}

\begin{figure}
    \centering
    \includegraphics[width=\linewidth]{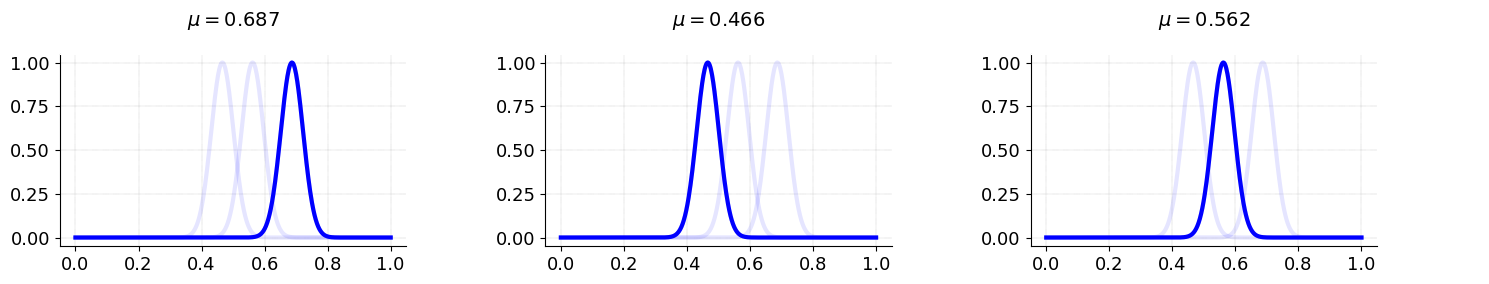}
   \\ \vspace{9mm}
    \includegraphics[width=\linewidth]{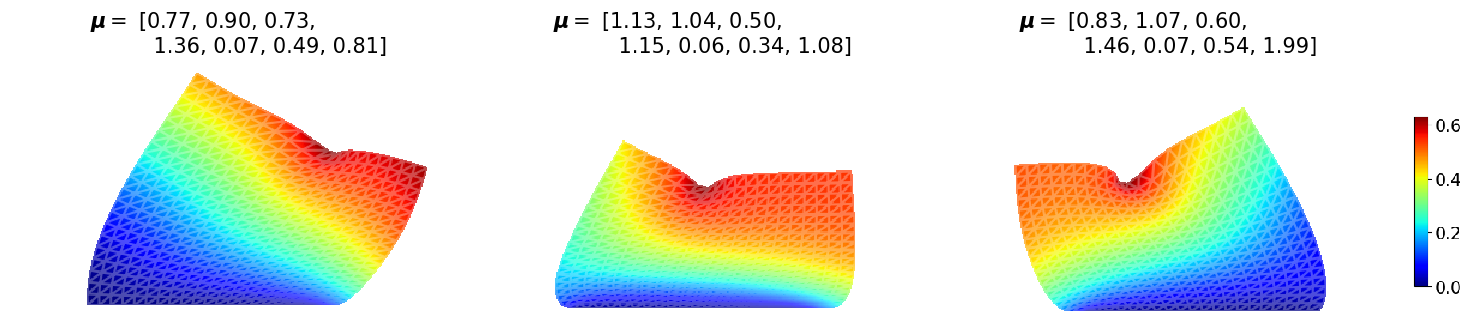} \\ \vspace{9mm}
     \includegraphics[width=\linewidth]{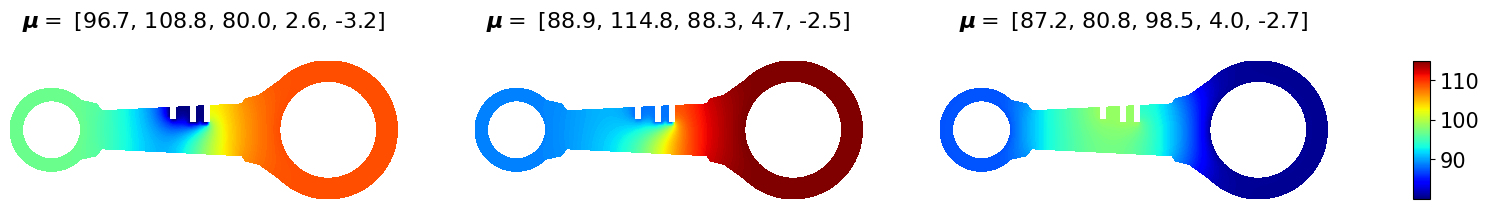}
    \caption{Random samples of $\bu$ for the three test cases considered in this work: \textnormal{\textsf{PGA}} (top), \textnormal{\textsf{ELS}} (middle), and \textnormal{\textsf{ROD}} (bottom). We denote with $\bmu$ the collection of problem parameters associated to each simulation.}
    \label{fig:dataset-variability}
\end{figure}

\renewcommand{\arraystretch}{1.3}
\begin{table}
\centering
\footnotesize{\;\\\;\\\;\\\;\\
\begin{tabular}{llll}
\toprule
\bf {Case study} & \bf {Parameter}  & \bf {Range} & \bf {Description} \\
\midrule
\textsf{PGA} & $\mu$ & [0.3, 0.7] & wave position  \\
 \hline
 \multirow{7}{*}{\textsf{ELS}} & $\rho$ & [0.5, 2] & body force
 \\
 & $\lambda$ & [0.9, 1.1] & first Lamé coefficient \\

 & $\mu$ & [0.5, 1] & second Lamé coefficient \\
 
 & $m$ & [0.9, 1.1] & mass of the falling object \\

 & $\delta$ & [0.05, 0.1] & size of the falling object\\

 & $x_0$ & [0.3, 0.7] & impact location\\

 & $\theta$ & [$\frac{\pi}{4}$, $\frac{3\pi}{4}$] & collision angle\\
 \hline
 \multirow{5}{*}{\textsf{ROD}}  & $T_1$ & [80, 120] & temperature at the inner left circle \\
 
 & $T_2$ & [80, 120] & temperature at the inner right circle \\

 & $T_3$ & [80, 120] & temperature at the toothed region \\

 & $\log_{10}f$ & [1.5, 5] & source term (log scale)\\

 & $\log_{10}\alpha$ & [-3.5, -1.5] & irradiation coefficient (log scale)\\
 
\bottomrule
\end{tabular}
}
\caption{Problem parameters for the three case studies considered in the numerical experiments. The variability of such parameters is responsible of the stochasticity of the high-dimensional datum $\bu$.}
\label{tab:params}
\end{table}

\paragraph{Parameterized gaussian \textnormal{(\textsf{PGA})}} As the first test case, we select a classical experiment setup stemming from the reduced order modeling community, namely, the one-dimensional parameterized gaussian function,
\begin{equation*}
    u(x;\mu) = \exp(-400(x-\mu)^2), \quad x \in [0,1],
\end{equation*}
having $\mu \in [0.3,0.7]$ as a unique parameter. The latter can be thought as the solution at terminal time $T=1$ of the advection equation $\partial_{t}u+\mu\partial_x u=0$ with a fixed initial profile. For the sake of the discretization we use a uniform grid of stepsize $h=1/513$, resulting in a problem dimension of $n_0=514$.

\paragraph{Linear elasticity \textnormal{(\textsf{ELS})}} For this case study, we consider a 
PDE model describing the collision between a falling object and a block of elastic material, 
\begin{equation}
\label{eq:elasticity}
\left\{
\begin{array}{rll}
- \nabla \cdot \left (\lambda\nabla\cdot\boldsymbol{u} + \mu\left(\nabla\boldsymbol{u}+\nabla\boldsymbol{u}^{\top}\right)\right) &= \boldsymbol{f} \qquad & \text{in }\Omega\\
\boldsymbol{u}&=\mathbf{0} \qquad& \text{on }\Gamma_{\text{bottom}}\\
\boldsymbol{u}&=\boldsymbol{T} \qquad & \text{on }\Gamma_{\text{top}}\\
\lambda(\nabla\cdot \boldsymbol{u})\cdot\boldsymbol{n}+\mu\left(\nabla\boldsymbol{u}+\nabla\boldsymbol{u}^{\top}\right)\cdot\boldsymbol{n}&=\mathbf{0} \qquad & \text{on }\partial\Omega\setminus (\Gamma_{\text{bottom}}\cup\Gamma_{\text{top}}).
\end{array}
\right.
\end{equation}
Here, $\Omega = (0,1)^2$ is the spatial domain, $\Gamma_{\text{bottom}}$ and $\Gamma_{\text{top}}$ are the bottom and top edge, while $\boldsymbol{T}(x,y)=\boldsymbol{1}_{[x_0-\delta,\;x_0+\delta]}(x)\cdot[\cos\theta,-\sin\theta]^\top m$ and $\boldsymbol{f}=[0, -\rho]^\top$ are the imposed displacement at the top edge and the forcing term, respectively. The problem depends on 7 parameters, modeling the mechanical properties of the absorbing block and the falling object: we refer to Table \ref{tab:params} for a short description about their meaning.

We approximate the solution to \eqref{eq:elasticity} using continuous piecewise-linear finite elements  ($\mathbb{P}^1$-FEM) over a a mesh consisting of 441 vertices. Notice that, since that the solution to \eqref{eq:elasticity} is vector-valued, the actual problem dimension is $n_0=2\times441=882.$

\paragraph{Nonlinear diffusion in a rod geometry \textnormal{(\textsf{ROD})}} To generate the third dataset, we solve repeatedly a nonlinear PDE describing the temperature distribution in a non-trivial rod geometry, for different configurations of the governing parameters: see Table \ref{tab:params}. The PDE reads as follows,
\begin{equation}
\label{eq:rod}
\left\{
\begin{array}{rll}
    -\nabla\cdot(\sigma(u)\nabla u) &= f-\alpha u^{4} \qquad& \text{in}\;\Omega\\
    \sigma(u)\nabla u \cdot \boldsymbol{n} &= 0 \qquad & \text{on}\;\partial\Omega\setminus\left(\cup_{i=1}^{3}\Gamma_{i}\right)\\
    u &= T_{i} \qquad & \text{on}\;\Gamma_{i},
\end{array}
    \right.
\end{equation}
where $\sigma(u):=10^3 + e^{u/8}$ is the diffusion coefficient, whereas $\Gamma_{1},\Gamma_{2},\Gamma_{3}$ are the inner left circle, inner right circle, and the ``toothed region", respectively: see Fig.~\ref{fig:dataset-variability}. We approximate the solution to \eqref{eq:rod} using $\mathbb{P}^1$-FEM with $n_0=4347$ degrees of freedom, distributed across a nonstructured triangular mesh.

\subsection{Initialization}

In the following, we validate the efficacy of our initialization strategy by comparing it with the standard ones proposed in the literature. Specifically, in the case of unconstrained architectures, we consider the so-called ``He initialization" \cite{he2015delvingdeeprectifierssurpassing} as our main competitor. Here, we use the original algorithm proposed  for the case when $\act$ is the $(\alpha,\beta)-$LeakyReLU. However, it is possible to extend the underlying reasoning put forth in \cite{he2015delvingdeeprectifierssurpassing} to other bilipschitz activations: we defer the interested reader to the Appendix~\ref{appendix:init}.
In the case of autoencoders with (bi)-orthogonality constraints, instead, we consider  the initialization proposed by Otto et al. \cite{otto2023learning} as a benchmark. The latter consists in randomly sampling 
the weight matrices from the orthogonal group, while initializing all bias vectors to zero.

In order to assess the effectiveness of the EYS initialization, we investigate its impact on: \emph{(i)} the test MSE before training, to see whether it provides a good initial guess; \emph{(ii)} the training dynamics, to see whether it affects the final accuracy and the speed of convergence. We summarize our findings below.

\paragraph{Before training} For the sake of brevity, we limited this preliminary analysis to the \textsf{PGA} case study: the \textsf{ELS} and \textsf{ROD} test cases will be addressed in the next paragraph. We report our findings in 
Figure~\ref{fig:init_study}, 
where we explore how different \textsf{SAE} architectures of variable latent dimension $n_l$ and depth $l$ perform before training, comparing their MSE under the EYS and the He initialization, respectively.
The analysis, here reported relatively to the HypAct$_\theta$ activation, was repeated for different values of the $\theta$-parameter, corresponding to different sharpness values of the nonlinearity $\act=\act_\theta$.

We observe that, as expected, the latent dimension does not influence the initial MSE value in the ``standard" initialization, regardless of the sharpness of the activation function. Conversely, being data-driven, the EYS initialization shows a strong dependency between the starting MSE and the latent dimension. In particular, for larger values of $n_l$ the EYS initialization performs significantly better -- by several orders of magnitude -- when compared to the literature standard. Notably, this improvement is found consistently for all sharpness values. 

The MSE of a \textsf{SAE} initialized with the EYS initialization, however, tends to deteriorate as the architecture gets deeper and deeper, especially when combined with activation functions entailing a larger sharpness. Nonetheless, this is consistent with our estimates in Theorem~\ref{theorem:soae}, which ultimately suggest an exponential dependency of the error with respect to $l$, with a growth rate depending on $\angle(\act_\theta) = \angle(\theta).$ In contrast, the He initialization shows no dependency on the depth of the architecture, coherently with the rationale underlying its construction \cite{he2015delvingdeeprectifierssurpassing}.

We mention that similar results were obtained for the $(\alpha,\beta)$-LeakyReLU, as well as for the other case studies and the other autoencoder architectures (\textsf{SBAE}, \textsf{SOAE}). We defer the interested reader to the Supplementary Materials.


\begin{figure}
    \centering
    \includegraphics[width=0.99\linewidth]{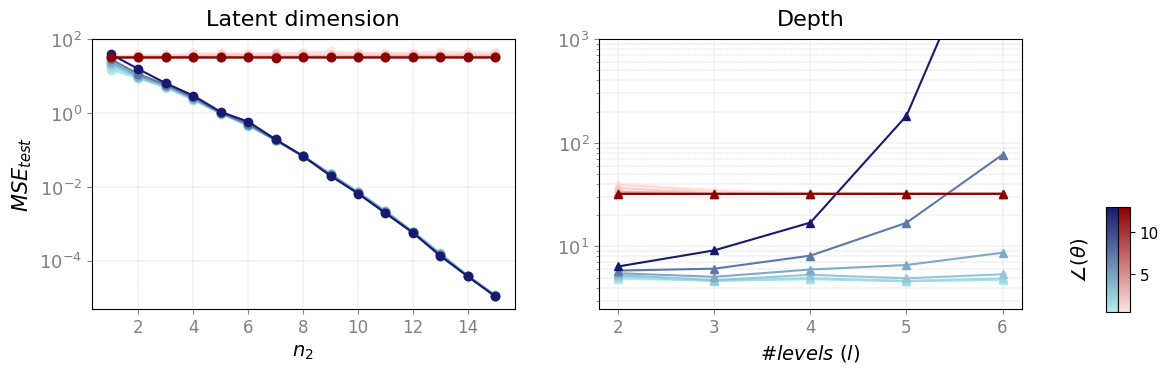}\vspace{-10pt}
    \caption{Comparison between the EYS {\em ({\color{darkblue}\textbf{blue}})} and the He initialization {\em ({\color{darkred}\textbf{red}})} for the {\em\textsf{PGA}} case study. Models are compared in terms of the \textbf{initial} MSE over the test set. Results refer to {\em \textsf{SAE}}s with $HypAct_{\theta}$ activation of varying sharpness $\angle=\angle(\theta)$. Left: two-level architecture $\{n_0,n_1,n_2\}$ with $n_1=20$ and varying latent dimension $n_2$. Right: {\em \textsf{SAE}}s of variable depth but fixed latent dimension, obtained following the pattern $\{n_0, 65,3\}, \{n_0,65,5,3\}, \ldots, \{n_0, 65,33,17,9,5,3\}$. NB: to account for the stochasticity of the He initialization, we repeatedly initialized the architectures 100 times and only kept the trial reporting the best performance.}
    \label{fig:init_study}
\end{figure}

\paragraph{During and after training}
To further validate the efficacy of our proposed initialization strategy, we ought to demonstrate the impact of our proposed initialization on the training dynamics, this time for a fixed architecture design (cf. Table \ref{tab:architecture-skeleton}). Our findings are summarized in Fig.~\ref{fig:comparison}. There, we see that the EYS procedure consistently outperforms its standard counterpart, ensuring both a faster convergence and a  smaller loss at the end of the training phase. Notably, this is true for all case studies and all autoencoder architectures, \textsf{SAE}, \textsf{SBAE} and \textsf{SOAE}. We mention that, while Fig.~\ref{fig:comparison} only shows the results for the $(\alpha,\beta)$-LeakyReLU with a specific choice of $\alpha$ and $\beta$, analogous results were found for the HypAct$_\theta$ activation, as well as for other values of $\alpha$ and $\beta$. We refrain from reporting here such results in order to minimize redundancy, and we refer the interested reader to the Supplementary Materials.

\begin{figure}[htb!]
    \centering
    \includegraphics[width=\linewidth]{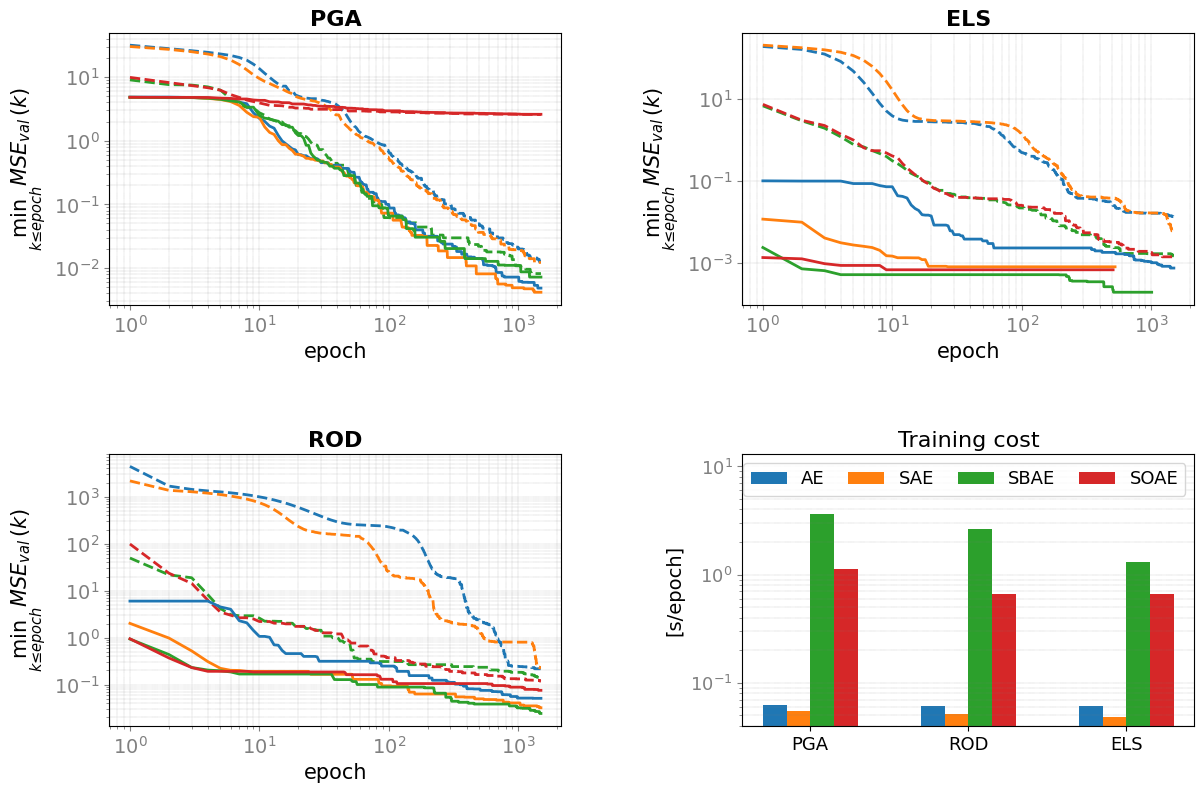}\vspace{-10pt}
    \caption{Visualization of the training dynamics and computational performance of {\em \textsf{AE}, \textsf{SAE}, \textsf{SBAE}, and \textsf{SOAE}}, for all the test cases. We compare our proposed initialization {\em (solid line)} with the literature standard {\em (dashed line)}. The reported result refer to the $\textnormal{LeakyReLU}_{\alpha,5/4}$ activation, with $\angle(\alpha) = 0.5$. 
    We refer the reader to Table \ref{tab:architecture-skeleton} for the specifics on the employed architectures.}
    \label{fig:comparison}
\end{figure} 

\renewcommand{\arraystretch}{1.2}
\begin{table}[htb!]
\centering
\small{
\begin{tabular}{lccc}
\toprule
\bf {Test case} & \sf {PGA}  & \sf {ELS} & \sf {ROD} \\
\midrule
\bf {Skeleton} $\{n_j\}_{j=0}^l$ & \{514,\; 64,\;15,\;3\} & \{882,\;15,\;10,\;8\} & \{4347, \;15,\;7,\;5\} \\
\bottomrule
\end{tabular}
}
\caption{Architecture skeletons for the comparative study. We use the criteria proposed in \cite{franco2023deep} to set the latent dimension, while we use the error bounds developed in this paper to heuristically set the other levels' complexity. }
\label{tab:architecture-skeleton}
\end{table}

\renewcommand{\arraystretch}{1.2}
\begin{table}[htb!]
\centering
\footnotesize{
\begin{tabular}{lp{2cm}p{2cm}*{3}{S[table-format=1.2e-2]}{S[table-format=1.2e-2]}}
\toprule
\bf {Activation} & \bf {Sharpness}  & \bf {Model} & \bf {MSE} & \bf {MRE} \\
\midrule
\multirow{8}{*}{$\textnormal{HypAct}_{\theta}$} & \multirow{4}{*}{$\angle(\theta) = 0.5$} & \textsf{AE} & 1.28e+02 & 1.43e-03 \\
& & \textsf{SAE} & 1.09e+02 & 1.43e-03 \\
& & \textsf{SBAE} & 4.67e+01 & 8.26e-04 \\
 & & \textsf{SOAE} & 1.27e+02 & 1.44e-03\\
 \cline{2-5}
 & \multirow{4}{*}{$\angle(\theta) = 3.0$} & \textsf{AE} &  3.31e+02 & 2.17e-03 \\
 &  & \textsf{SAE} & 1.24e+02 & 1.57e-03\\
 & & \textsf{SBAE} & 5.10e+01 & 8.44e-04 \\
 & & \textsf{SOAE} & 1.03e+02 & 1.26e-03 \\
 \hline
\multirow{8}{*}{$\textnormal{LeakyReLU}_{\alpha,5/4}\;\;\;$} & \multirow{4}{*}{$\angle(\alpha) = 0.5$} & \textsf{AE} &  1.19e+02 & 1.48e-03 \\
& & \textsf{SAE} & 7.59e+01 & 1.11e-03 \\
 & & \textsf{SBAE} & 6.31e+01 & 9.65e-04  \\
 & & \textsf{SOAE} & 1.57e+02 & 1.59e-03 \\
 \cline{2-5}
 & \multirow{4}{*}{$\angle(\alpha) = 3.0$} & \textsf{AE} &  9.38e+02 & 3.36e-03\\
 & & \textsf{SAE} & 1.11e+02 & 1.36e-03 \\
 & & \textsf{SBAE} & 4.20e+01 & 8.46e-04 \\
 & & \textsf{SOAE} & 1.50e+02 & 1.63e-03 \\
\bottomrule
\end{tabular}
}
\caption{Evaluation of the reconstruction accuracy of {\em \textsf{AE}, \textsf{SAE}, \textsf{SBAE}, and \textsf{SOAE}} for the {\em \textsf{ROD}} test case, varying the activation function and the sharpness parameter. }
\label{tab:rod-table}
\end{table}

\subsection{Comparative study} Hereinafter, we compare \textsf{SAE}, \textsf{SBAE}, \textsf{SOAE} in terms of training efficiency and reconstruction accuracy, varying critical parameters such as the activation function and its sharpness value. In order to gain additional insights, we further enrich our analysis by including a classic autoencoder (\textsf{AE}) as a model baseline. We construct the latter using same skeleton adopted for the other architectures, cf. Table~\ref{tab:architecture-skeleton}, but fixing the activation function to be the same for the encoder and the decoder. More precisely, we set $\rho_{AE}^{dec} = \rho^{enc}_{AE} = \rho_{SAE}^{-1}$, so that the decoders of \textsf{AE} and its symmetric counterpart share the exact same structure. 
Our findings are summarized in Table~\ref{tab:rod-table} and Fig.~\ref{fig:comparison}. 

As a first notice, we observe that, coherently with our previous analysis, the EYS initialization proves superior for all autoencoder architectures.
Surprisingly, this is also true for the classical \textsf{AE}.
This could be explained by noting that, after initialization, the decoders of \textsf{AE} and \textsf{SAE} are effectively identical. Still, after training, \textsf{SAE} typically achieves better performances, supporting the use of different activation functions for encoding and decoding: see Table~\ref{tab:rod-table}. Furthermore, during training, \textsf{SAE} converges much faster than \textsf{AE}, while also entailing a smaller computational cost, Fig.~\ref{fig:comparison}. This could be explained by the fact that, at the bottleneck, \textsf{AE} has an additional activation compared to \textsf{SAE}. 

What is most interesting, however, is that \textsf{SBAE} often outperforms \textsf{SAE}, meaning that the biorthogonality constraints not only do not hinder the expressivity of the architecture, but they can even prove beneficial. Since $\SBAE\subseteq\SAE$, this could signify that the loss landscape is better behaved on the submanifold of \textsf{SBAE} than on the larger space of \textsf{SAE}, resulting in a more regular optimization.
On the other hand, these benefits come at a larger computational cost: \textsf{SBAE}, in fact, is the most expensive architecture to be trained, as seen in Fig.~\ref{fig:comparison}.

An interesting compromise is given by \textsf{SOAE}, which, although still more onerous than a classical unconstrained autoencoder, is effectively cheaper than a \textsf{SBAE} whilst still offering competitive performances. However, we also notice that the tight architecture restrictions characterizing \textsf{SOAE}s may occasionally prevent those architectures from generating accurate reconstructions: see \textsf{PGA} test case, Fig. \ref{fig:comparison}.


\section*{Conclusions}
\label{sec:conclusions}
Within the present work, we devoted our efforts to the study of deep symmetric autoencoders. Specifically, we put forth
\vspace{0.15cm}
\begin{itemize}[itemsep=4pt]
 \item[{\footnotesize$\bullet$}] \emph{a comprehensive mathematical framework} for their classification, analysis, and implementation. In particular, we first distinguished between constrained and unconstrained architectures. Under this scope, we characterized their numerical properties, focusing on model stability and representation consistency. To complete the picture, we devised new practical strategies for their parameterization and optimization;
 \item[{\footnotesize$\bullet$}] \emph{a novel initialization strategy}, termed EYS, directly stemming from newfound error bounds that connect deep symmetric autoencoders back to linear reduction methods. To the best of our knowledge, this is the first \textit{data-driven} strategy that leverages low-rank structures for the initialization of autoencoders.
\end{itemize}
\vspace{0.15cm}

In our experiments, the EYS initialization reported very promising results, suggesting a potentially broader impact than the one explored in this work. 
Indeed, we believe that plenty of exciting research directions may originate from here, both from a theoretical and a practical standpoint.
For instance, 
future research may focus on deepen our theoretical understanding of the relation between the EYS optimization and the optimization dynamics, possibly investigating a connection with \cite{nguyen2020benefitsjointlytrainingautoencoders}. 
Another interesting question is whether one could craft suitable extension of the proposed initialization to be used for more complex and competitive architectures, such as convolutional autoencoders \cite{pant2021deep} or graph-based architectures \cite{pichi2024graph}. 
Finally, one could exploit the proposed framework for the construction of neural surrogates for the simulation of complex physical systems, viz. DL-ROMs \cite{fresca2021comprehensive}.

On top of this, future research could also be devoted at addressing some of the limitations present in this work, such as the exploration of applications beyond model order reduction, such as, e.g., image compression \cite{balle2017endtoend}, or the derivation of suitable theoretical results that bridge model expressivity and training complexity, as in \cite{adcock2021gap, de2023convergence}.


\section*{Acknowledgments}
SB acknowledges the support of European Union - NextGenerationEU within the Italian PNRR program (M4C2, Investment 3.3) for the PhD Scholarship ``Physics-informed data augmentation for machine learning applications".
NF acknowledges the support of the project \textit{Reduced Order Modeling and Deep Learning for the real-time approximation of PDEs (DREAM)}, grant no. FIS00003154, funded by the Italian Science Fund (FIS) and by Ministero dell'Università e della Ricerca (MUR).
Part of the simulations discussed in this work were performed on the HPC Cluster of the Department of Mathematics of Politecnico di Milano which was funded by MUR grant ``Dipartimento di Eccellenza" 2023-2027. The authors are members of the Gruppo Nazionale Calcolo Scientifico-Istituto Nazionale di Alta Matematica (GNCS-INdAM).

\bibliographystyle{plain}
{\footnotesize
\bibliography{bibliography}

\begin{thebibliography}{10}

\bibitem{abiri2019establishing}
Najmeh Abiri, Bj{\"o}rn Linse, Patrik Ed{\'e}n, and Mattias Ohlsson.
\newblock Establishing strong imputation performance of a denoising autoencoder in a wide range of missing data problems.
\newblock {\em Neurocomputing}, 365:137--146, 2019.

\bibitem{adcock2021gap}
Ben Adcock and Nick Dexter.
\newblock The gap between theory and practice in function approximation with deep neural networks.
\newblock {\em SIAM Journal on Mathematics of Data Science}, 3(2):624--655, 2021.

\bibitem{balle2017endtoend}
Johannes Ball{\'e}, Valero Laparra, and Eero~P. Simoncelli.
\newblock End-to-end optimized image compression.
\newblock In {\em International Conference on Learning Representations}, 2017.

\bibitem{chatterjee2000introduction}
Anindya Chatterjee.
\newblock An introduction to the proper orthogonal decomposition.
\newblock {\em Current Science}, 78(7):808--817, 2000.

\bibitem{chen2018autoencoder}
Zhaomin Chen, Chai~Kiat Yeo, Bu~Sung Lee, and Chiew~Tong Lau.
\newblock Autoencoder-based network anomaly detection.
\newblock In {\em 2018 Wireless telecommunications symposium (WTS)}, pages 1--5. IEEE, 2018.

\bibitem{cho2014properties}
Kyunghyun Cho, Bart van Merri{\"e}nboer, Dzmitry Bahdanau, and Yoshua Bengio.
\newblock On the properties of neural machine translation: Encoder{--}decoder approaches.
\newblock In Dekai Wu, Marine Carpuat, Xavier Carreras, and Eva~Maria Vecchi, editors, {\em Proceedings of {SSST}-8, Eighth Workshop on Syntax, Semantics and Structure in Statistical Translation}, pages 103--111, Doha, Qatar, October 2014. Association for Computational Linguistics.

\bibitem{conway2019course}
John~B Conway.
\newblock {\em A course in functional analysis}, volume~96.
\newblock Springer, New York, 2019.

\bibitem{de2023convergence}
Maarten~V de~Hoop, Nikola~B Kovachki, Nicholas~H Nelsen, and Andrew~M Stuart.
\newblock Convergence rates for learning linear operators from noisy data.
\newblock {\em SIAM/ASA Journal on Uncertainty Quantification}, 11(2):480--513, 2023.

\bibitem{dudley2018real}
Richard~M Dudley.
\newblock {\em Real analysis and probability}.
\newblock Chapman and Hall/CRC, 2018.

\bibitem{eckart1936approximation}
Carl Eckart and Gale Young.
\newblock The approximation of one matrix by another of lower rank.
\newblock {\em Psychometrika}, 1(3):211--218, Sep 1936.

\bibitem{franco2023deep}
Nicola Franco, Andrea Manzoni, and Paolo Zunino.
\newblock A deep learning approach to reduced order modelling of parameter dependent partial differential equations.
\newblock {\em Mathematics of Computation}, 92(340):483--524, 2023.

\bibitem{fresca2021comprehensive}
Stefania Fresca, Luca Dede’, and Andrea Manzoni.
\newblock A comprehensive deep learning-based approach to reduced order modeling of nonlinear time-dependent parametrized pdes.
\newblock {\em Journal of Scientific Computing}, 87:1--36, 2021.

\bibitem{he2015delvingdeeprectifierssurpassing}
Kaiming He, Xiangyu Zhang, Shaoqing Ren, and Jian Sun.
\newblock Delving deep into rectifiers: Surpassing human-level performance on imagenet classification, 2015.

\bibitem{hinton2006reducing}
G.~E. Hinton and R.~R. Salakhutdinov.
\newblock Reducing the dimensionality of data with neural networks.
\newblock {\em Science}, 313(5786):504--507, 2006.

\bibitem{hinton1993autoencoders}
Geoffrey~E Hinton and Richard Zemel.
\newblock Autoencoders, minimum description length and helmholtz free energy.
\newblock In J.~Cowan, G.~Tesauro, and J.~Alspector, editors, {\em Advances in Neural Information Processing Systems}, volume~6. Morgan-Kaufmann, 1993.

\bibitem{kramer1992autoassociative}
M.A. Kramer.
\newblock Autoassociative neural networks.
\newblock {\em Computers \& Chemical Engineering}, 16(4):313--328, 1992.
\newblock Neutral network applications in chemical engineering.

\bibitem{kramer1991nonlinear}
Mark~A Kramer.
\newblock Nonlinear principal component analysis using autoassociative neural networks.
\newblock {\em AIChE journal}, 37(2):233--243, 1991.

\bibitem{lanthaler2022error}
Samuel Lanthaler, Siddhartha Mishra, and George~E Karniadakis.
\newblock Error estimates for deeponets: A deep learning framework in infinite dimensions.
\newblock {\em Transactions of Mathematics and Its Applications}, 6(1):tnac001, 2022.

\bibitem{loeve1978probability}
M.~Loeve.
\newblock {\em Probability Theory II}.
\newblock F.W.Gehring P.r.Halmos and C.c.Moore. Springer, 1978.

\bibitem{mucke2021reduced}
Nikolaj~T. Mücke, Sander~M. Bohté, and Cornelis~W. Oosterlee.
\newblock Reduced order modeling for parameterized time-dependent pdes using spatially and memory aware deep learning.
\newblock {\em Journal of Computational Science}, 53:101408, 2021.

\bibitem{nguyen2020benefitsjointlytrainingautoencoders}
Thanh~V. Nguyen, Raymond K.~W. Wong, and Chinmay Hegde.
\newblock Benefits of jointly training autoencoders: An improved neural tangent kernel analysis, 2020.

\bibitem{otto2023learning}
Samuel~E. Otto, Gregory~R. Macchio, and Clarence~W. Rowley.
\newblock Learning nonlinear projections for reduced-order modeling of dynamical systems using constrained autoencoders.
\newblock {\em Chaos: An Interdisciplinary Journal of Nonlinear Science}, 33(11):113130, 11 2023.

\bibitem{pant2021deep}
Pranshu Pant, Ruchit Doshi, Pranav Bahl, and Amir Barati~Farimani.
\newblock Deep learning for reduced order modelling and efficient temporal evolution of fluid simulations.
\newblock {\em Physics of Fluids}, 33(10):107101, 10 2021.

\bibitem{pytorch}
Adam Paszke, Sam Gross, Francisco Massa, Adam Lerer, James Bradbury, Gregory Chanan, Trevor Killeen, Zeming Lin, Natalia Gimelshein, Luca Antiga, Alban Desmaison, Andreas Kopf, Edward Yang, Zachary DeVito, Martin Raison, Alykhan Tejani, Sasank Chilamkurthy, Benoit Steiner, Lu~Fang, Junjie Bai, and Soumith Chintala.
\newblock Pytorch: An imperative style, high-performance deep learning library.
\newblock In H.~Wallach, H.~Larochelle, A.~Beygelzimer, F.~d\textquotesingle Alch\'{e}-Buc, E.~Fox, and R.~Garnett, editors, {\em Advances in Neural Information Processing Systems 32}, pages 8024--8035. Curran Associates, Inc., 2019.

\bibitem{pichi2024graph}
Federico Pichi, Beatriz Moya, and Jan~S Hesthaven.
\newblock A graph convolutional autoencoder approach to model order reduction for parametrized pdes.
\newblock {\em Journal of Computational Physics}, 501:112762, 2024.

\bibitem{quarteroni2015reduced}
Alfio Quarteroni, Andrea Manzoni, and Federico Negri.
\newblock {\em Reduced basis methods for partial differential equations: an introduction}, volume~92.
\newblock Springer, Cham, 2015.

\bibitem{reed1980methods}
Michael Reed and Barry Simon.
\newblock {\em Methods of modern mathematical physics. vol. 1. Functional analysis}.
\newblock Academic San Diego, San Diego, 1980.

\bibitem{Schmidt1907}
Erhard Schmidt.
\newblock Zur theorie der linearen und nichtlinearen integralgleichungen.
\newblock {\em Mathematische Annalen}, 63(4):433--476, Dec 1907.

\bibitem{teng2019invertible}
Yunfei Teng and Anna Choromanska.
\newblock Invertible autoencoder for domain adaptation.
\newblock {\em Computation}, 7(2), 2019.

\end{thebibliography}
}
\newpage

\begin{appendices}
\noindent{\bf{\LARGE{Appendices}}}

\section{Technical proofs} 
\label{appendix:proofs}

\begin{proof}[\unskip\nopunct Proof of Theorem~\ref{theorem:pod}]
Recall that for a generic random vector $\mathbf{z}\in\mathbb{R}^{m}$ one has
$$\mathbb{E}\|\mathbf{z}-\mathbf{d}\|^2\ge \mathbb{E}\|\mathbf{z}-\mathbb{E}[\mathbf{z}]\|^2=\text{Tr}(\cmatrix[\mathbf{z}]),$$
for all deterministic vectors $\mathbf{d}\in\mathbb{R}^m$. Then, it is straightforward to prove that $\bq_*=\mathbb{E}[\bu]$ provides an optimal shifting. Indeed, for all  $\bV\in\mathcal{O}_{\ndim, n}$ and all $\bq\in\mathbb{R}^{n_0}$ one has
$$\mathbb{E}\|\bu-\left[\bV\bV^\top(\bu-\bq)+\bq\right]\|^2=\mathbb{E}\|(\mathbf{I}-\bV\bV^\top)\bu - (\mathbf{I}-\bV\bV^\top)\bq\|^2.$$
Since $\mathbb{E}\left[(\mathbf{I}-\bV\bV^\top)\bu\right]=(\mathbf{I}-\bV\bV^\top)\mathbb{E}\left[\bu\right]$, up to setting $\mathbf{z}:=(\mathbf{I}-\bV\bV^\top)\bu$, it is evident that $\bq_*=\mathbb{E}[\bu]$ is optimal, regardless of the matrix $\bV.$
Next, we notice that $\tilde{\bu}:=\bu-\mathbb{E}[\bu]$ and $\bu$ share the same covariance matrix, $\cmatrix[\tilde{\bu}]=\cmatrix[\bu].$ Therefore, without loss of generality, we shall continue the proof assuming that $\mathbb{E}[\bu]\equiv0$. 

Let now $T_\bu=\sum_{i=1}^{r}s_i\bv_i\langle\xi_i,\cdot\rangle_{L^{2}(\mathscr{P})}$ be the SVD decomposition of $T_\bu$, given as Theorem~\ref{theorem:EYS}.
Notice that, up to considering $\cmatrix[\bu]$ as an operator from $\mathbb{R}^{\ndim}\to\mathbb{R}^{\ndim}$, one has
$\cmatrix[\bu]=T_\bu T_\bu^*.$ In fact, for all $\mathbf{a},\mathbf{b}\in\mathbb{R}^{\ndim}$,
\begin{multline*}
\mathbf{a}^\top T_\bu T_\bu^*\mathbf{b}=\mathbf{a}^\top \mathbb{E}[\bu T_\bu^*\mathbf{b}]=\mathbb{E}[\mathbf{a}^\top\bu T_\bu^*\mathbf{b}]=\langle\mathbf{a}^\top\bu,T_\bu^*\mathbf{b}\rangle_{L^2(\mathscr{P})}=\\=\langle T_\bu\mathbf{a}^\top\bu,\mathbf{b}\rangle_{\mathbb{R}^{\ndim}}=\mathbb{E}[\bu\mathbf{a}^\top\bu]^\top\mathbf{b}=\mathbb{E}[(\bu^\top\mathbf{a})(\bu^\top\mathbf{b})]=
\mathbf{a}^\top\cmatrix[\bu]\mathbf{b}.\end{multline*}
It follows that, up to multiplicities, $s_i^2=\lambda_i(\cmatrix[\bu])$ and $\mathbf{X}=[\bv_1^*,\dots,\bv_r^*]$, as the eigenvalues and eigenvectors of $\cmatrix[\bu]$ are nothing but the squared singular values and left singular vectors of $T_\bu$.

As a next step, given any $\bV\in\mathscr{O}_{\ndim,n}$, let  $P_{\bV}:\mathbb{R}^{\ndim}\to\mathbb{R}^{\ndim}$ be the projector acting as $P_{\bV}:\boldsymbol{y}\mapsto\bV\bV^\top\boldsymbol{y}.$ Notice that, by linearity,
$T_{P_{\bV}\bu}=P_{\bV}T_\bu.$
In particular, $\rank(T_{P_{\bV}})\le n$, and, by Proposition~\ref{prop:isometry} and Theorem~\ref{theorem:EYS},
$$\mathbb{E}\|\bu-\bV\bV^\top\bu\|^2=\|T_\bu-T_{P_\bV\bu}\|_{\textnormal{HS}}^2\ge\sum_{i=n+1}^r\lambda_i(\cmatrix[u]).$$
At the same time, if $\bV=\bV_*$, we get
$
T_{P_{\bV_*\bu}}=P_{\bV_*}T_\bu = P_{\bV_*}\sum_{i=1}^{r}s_i\bv_i\langle\xi_i,\cdot\rangle_{L^2(\mathscr{P})}=\\=\sum_{i=1}^{r}s_iP_{\bV_*}\bv_i\langle\xi_i,\cdot\rangle_{L^2(\mathscr{P})}=\sum_{i=1}^{n}s_i\bv_i\langle\xi_i,\cdot\rangle_{L^2(\mathscr{P})},$
i.e.: $T_{P_{\bV_*\bu}}$ is 
the $n$-rank truncated SVD of $T_\bu$. 
Thus,
$$
    \mathbb{E}\|\bu-\bV_*\bV_*^\top\bu\|^2 = \|T_\bu-T_{P_{\bV_*}\bu}\|_{\textnormal{HS}}^2=\sum_{i=n+1}^r\lambda_i(\cmatrix[u]).
$$
\end{proof}

\;

\begin{proof}[\unskip\nopunct Proof of Lemma~\ref{lemma:bilipschitz}]
We first prove that if $\act$ is bilipschitz, then it is bijective. We have $|\act(x)-\act(y)|\ge\eta|x-y|>0$ whenever $x\neq y$, meaning that $\act$ is injective. Additionally, by setting $y=0$ and letting $x\to\pm\infty$, it is clear that $|\act(x)|\to\infty.$ Since $\act$ is continuous and injective, it is straightforward to conclude that either $\act(\pm\infty)=\pm\infty$ or $\act(\pm\infty)=\mp\infty$. In both cases, $\act$ ends up being surjective.
Concerning the bilipschitzianity of $\invact$, instead, notice that we can re-write Eq.~\eqref{eq:bilipschitz} as
\begin{equation*}
    \eta|\invact(\act(x))-\invact(\act(y))|\le|\act(x)-\act(y)|\le L|\invact(\act(x))-\invact(\act(y))|\quad\forall x,y\in\mathbb{R}.
\end{equation*}
As $\act$ is surjective, setting $w=\act(x)$ and $z=\act(y)$ yields $
    |\invact(w)-\invact(z)|\le\eta^{-1}|w-z|$ and $|\invact(w)-\invact(z)|\ge L^{-1}|w-z|$
for all $w,z\in\mathbb{R}$, thus concluding the proof.
\end{proof}

\;

\begin{proof}[\unskip\nopunct Proof of Proposition~\ref{prop:classes}]\;

    \begin{itemize}[before=\vspace{5pt}, itemsep=6pt, leftmargin=16pt]
        \item [{\em (i)}] Let $\autoencoder=\{(\bVE_j,\bVD_j,\bias_j, \act)\}_{j=1}^{n_l}\in\SBAE$. Notice that for each $j=1,\dots,n_l$ one has $e_j\circ d_j=\textsf{Id}_{n_j}.$
        In fact, given any $\mathbf{h}\in\mathbb{R}^{n_j}$,
        $$e_j(d_j(\mathbf{h}))=\act(\bVE_j((\bVD_j\invact(\mathbf{h})+\bias)-\bias))=\act(\bVE_j\bVD_j\invact(\mathbf{h}))=\act(\invact(\mathbf{h}))=\mathbf{h},$$
        due biorthogonality. Using associativity, it is follows that $\mathscr{E}(\autoencoder,\mathscr{D}(\autoencoder,\mathbf{c}))=(e_l\circ\dots\circ e_1\circ d_1\circ\dots\circ d_l)(\mathbf{c})=\mathbf{c}$ for all $\mathbf{c}\in\mathbb{R}^{n_l}$.

        \item[{\em (ii)}] This is a direct consequence of (i), as it can be obtained by setting $\mathbf{c}=\mathscr{E}(\autoencoder,\mathbf{v})$ and applying $\mathscr{D}(\autoencoder, \cdot)$ to both sides of the identity.
        
        \item [{\em (iii)}] Let $\autoencoder=\{(\bV_j,\bias_j, \act)\}_{j=1}^{n_l}\in\SOAE$. Notice that, for every $j=1,\dots,n_l$,  the maps $\mathbf{c}\mapsto\bV_j\mathbf{c}+\bias_j$ and $\mathbf{h}\mapsto\bV_j^\top(\mathbf{h}-\bias_j)$ are both affine and 1-Lipschitz. By composition, it follows that $\lip{e_j}\le \lip{\act}$ and $\lip{d_j}\le\lip{\invact}$. Since $\lip{\mathscr{E}(\autoencoder,\cdot)}\le\prod_{j=1}^{n_l}\lip{e_j}$ and $\lip{\mathscr{D}(\autoencoder,\cdot)}\le\prod_{j=1}^{n_l}\lip{d_j}$, the conclusion follows.
    \end{itemize}
\end{proof}

\newcommand{\bx}{\mathbf{x}}
\newcommand{\bb}{\mathbf{b}}
\newcommand{\Var}{\operatorname{Var}}
\section{Complements on the He initialization}
\label{appendix:init}
Hereby, we derive an extension of the so-called \emph{He initialization} by generalizing the construction proposed in \cite{he2015delvingdeeprectifierssurpassing} to more complex bilipschitz activations. To this end, let $f$ be a bilipischitz function with $f(0) = 0$ and
\begin{equation}
    \label{eq:derivative at 0}
    \limsup_{z\mapsto0^{-}}f'(z)\le 1 \le \liminf_{z\mapsto0^{+}}f'(z).
\end{equation}
The generic forward pass of a layer mapping $\mathbb{R}^r\to\mathbb{R}^q$ reads $\bx = \bW f(\bz) + \bb$, with $\bx, \bb \in \mathbb{R}^q$, $\bW \in \mathbb{R}^{q \times r}$ and $\bz \in \mathbb{R}^r$. 
Following \cite{he2015delvingdeeprectifierssurpassing}, we shall set $\mathbf{b}\equiv\mathbf{0}$ and assume that
\begin{itemize}[itemsep=2pt,leftmargin=20pt,after=\vspace{6pt}]
    \item[\emph{(i)}] $\bz = [z_1,\dots,z_r]^\top$ is a random vector with i.i.d. components $z_j \overset{\text{i.i.d.}}{\sim} \omega$;
    \item[\emph{(ii)}] $\omega$ is symmetric and has finite second moment;
    \item[\emph{(iii)}] $\bW = (w_{ij})_{i,j}$ is drawn at random according to $w_{ij} \overset{\text{i.i.d.}}{\sim} \mathcal{N}(0,\sigma^2)$. 
\end{itemize}
The idea is to pick $\sigma$ such that the variance of $\mathbf{z}$ transfers to $\mathbf{x}$ in a controlled manner. In this respect, we notice that if $\mathbf{x}=[x_1,\dots,x_q]^\top$ then, for all $i=1,\dots,q$,
\begin{equation}
    \label{eq: var xi}
    \Var(x_i) = \Var\bigg(\sum_{j=1}^r w_{ij}f(z_j)\bigg) = \sum_{j=1}^{r}\Var(w_{ij}f(z_j)) = n\sigma^2\mathbb{E}|f(z_1)|^2,
\end{equation}
due independence and identical distribution. Indeed, 
\begin{multline*}
\Var(w_{ij}f(z_j)) =\mathbb{E}[w_{ij}^2f(z_j)^2]-\mathbb{E}[w_{ij}f(z_j)]^2=\\=\mathbb{E}|w_{ij}|^2\mathbb{E}|f(z_j)|^2-\mathbb{E}[w_{ij}]^2\mathbb{E}[f(z_j)]^2=\sigma^2\mathbb{E}|f(z_j)|^2,
\end{multline*}since $\mathbb{E}[w_{i,j}]=0$. Thus, we are left to study
\begin{equation*}
    \mathbb{E}|f(z_1)|^2 = \int_{-\infty}^0 |f(z)|^2\omega(z)dz +\int_0^{+\infty} |f(z)|^2\omega(z)dz,  
\end{equation*}
which can be bounded from above and below, using the stability properties of $f$. Specifically, using \eqref{eq:derivative at 0} and the fact that $f(0)=0$, it is straightforward to see that $\lip{f^{-1}}^{-1}|z|\le|f(z)|\le |z|$ for all $z<0$ and $|z|\le|f(z)|\le \lip{f}|z|$ for all $z>0.$ Thus,
\begin{align*}
    \small
    \frac{1}{2}\Var(z_1)\lip{f^{-1}}^{-2} &\le \int_{-\infty}^0 |f(z)|^2\omega(z)dz \le \frac{1}{2}\Var(z_1)\le\\&\small\le \int^{+\infty}_0 |f(z)|^2\omega(z)dz \le \frac{1}{2}\Var(z_1) \lip{f}^{2}. 
\end{align*}
Combining the above with \eqref{eq: var xi} yields
\begin{equation}
    \label{eq:sigma-bounds}
    \frac{n\sigma^2}{2}\Var(z_1)[\lip{f^{-1}}^{-2} +1] \le \Var(x_i) \le \frac{n\sigma^2}{2}\Var(z_1) [\lip{f}^{2} + 1]. 
\end{equation}
In the wake of \cite{he2015delvingdeeprectifierssurpassing}, we select $\sigma$ such that $\Var(x_i)\approx\Var(z_1)$. Heuristically, we achieve so by taking the harmonic mean between the lower and upper bound values of $\sigma$ derived from \eqref{eq:sigma-bounds}, which results in $\sigma = 4[n(2 + \lip{f^{-1}}^{-2} + \lip{f}^2)]^{-1}$. 

Note that, in the case of the LeakyReLU, this choice coincides exactly with the initialization proposed in \cite{he2015delvingdeeprectifierssurpassing}.\\\\

\section{Supplementary material}
\subsection{Complementary results on the EYS initialization}

The following Fig. \ref{fig:init-study-sm} refers to the study conducted on the \textsf{PGA} test case concerning the performances of the EYS initialization \textit{before training}, varying the given architecture (\textsf{SAE} or \textsf{SBAE}/\textsf{SOAE}) and and the employed activation function (HypAct or LeakyReLU). 

Results for \textsf{SBAE} and \textsf{SOAE} are reported together as, for these architectures, both the EYS initialization and the literature standard coincide. We recall, in fact, that we considered the approach proposed in \cite{otto2023learning} as our benchmark: there, the authors propose to initialize \textsf{SBAE} architectures with $\bVE_j=\bVD_j^\top$. Consequently, right after initialization, the \textsf{SBAE} module is equivalent to a \textsf{SOAE}. This is also true in terms of the underlying parametrization, as we use a procedure analogous to the one discussed in Section 4.2 {\em (ii)} to initialize the matrices $\tilde{\mathbf{X}}_j,\tilde{\mathbf{Y}}_j,\tilde{\mathbf{Z}}_j,\mathbf{Q}_j$ and the vector $\mathbf{s}_j$.\\

\subsection{Complementary results on the comparative study}

We report below the complementary results for the comparative study. Specifically, Tables \ref{tab:pga}-\ref{tab:els} read as Table 3 in the paper but refer to the \textsf{PGA} and \textsf{ELS} case studies, respectively. Conversely, Figures \ref{fig:comparison_sm1}, \ref{fig:comparison_sm2}, and \ref{fig:comparison_sm3}  are analogous to Fig. \ref{fig:comparison} in the paper but discuss different choices of the activation function and its sharpness value.

\begin{figure}
    \centering
    \includegraphics[width=0.99\linewidth]{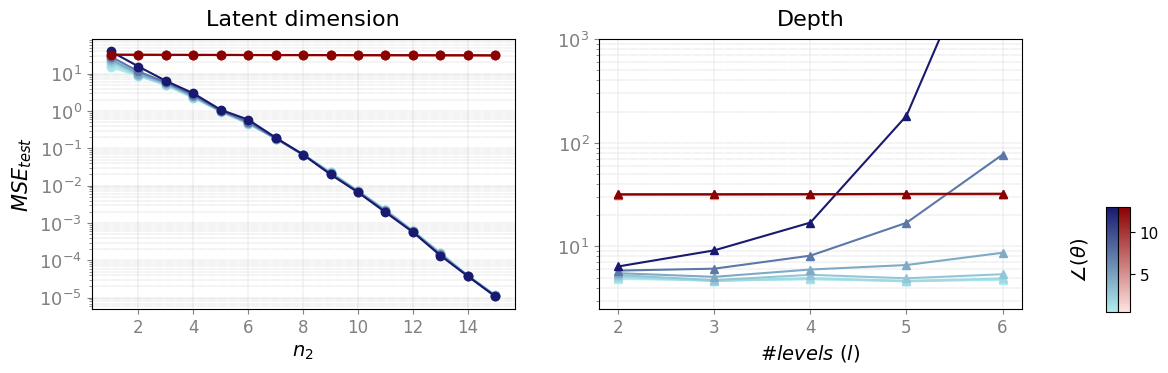}
    \includegraphics[width=0.99\linewidth]{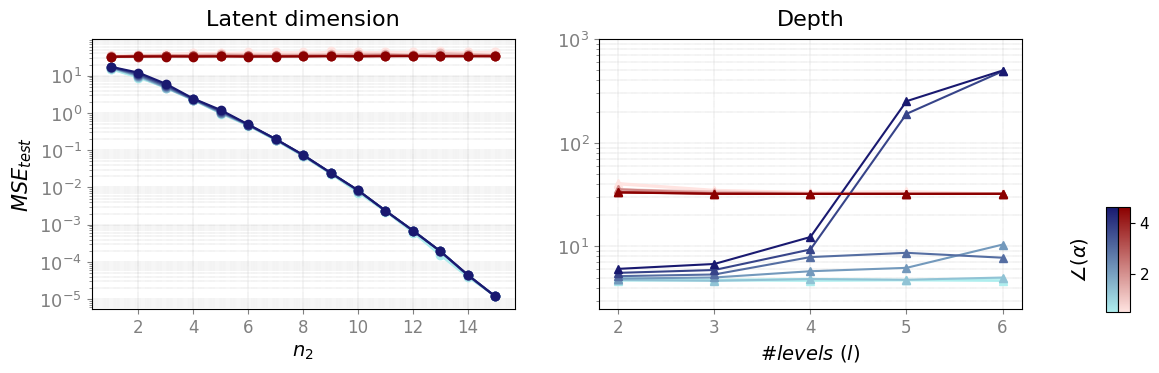}
    \includegraphics[width=0.99\linewidth]{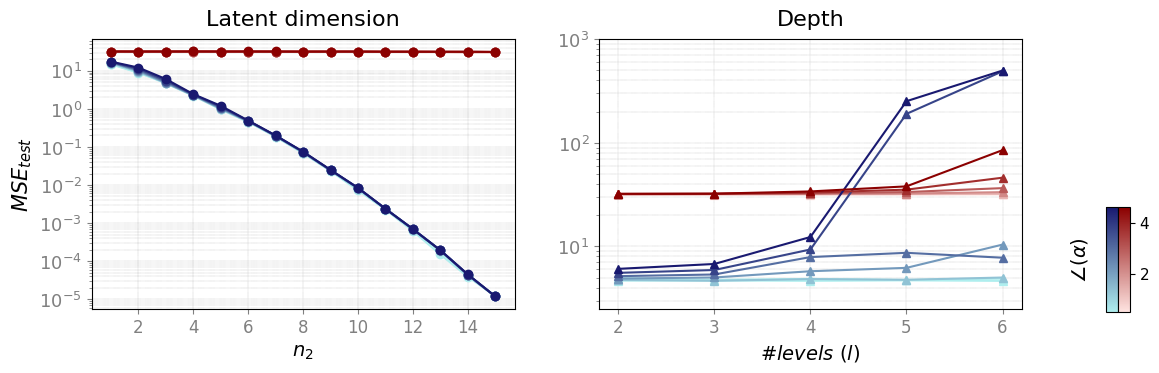}
    \caption{Comparison between the EYS {\em ({\color{darkblue}\textbf{blue}})} and the He initialization {\em ({\color{darkred}\textbf{red}})} for the {\em\textsf{PGA}} case study. Models are compared in terms of the \textbf{initial} MSE over the test set. Results refer to different architectures and activation of varying sharpness: {\em (top)} {\em \textsf{SOAE}}s with $HypAct_{\theta}$, {\em (middle)} {\em \textsf{SAE}}s with $LeakyReLU_{\alpha,5/4}$, {\em (bottom)} {\em \textsf{SAE}}s with $LeakyReLU_{\alpha,5/4}$ . The other specifics are the same as the ones of Fig. 5 of the main text.}
    \label{fig:init-study-sm}
\end{figure}

\begin{figure}[b]
    \centering
    \includegraphics[width=0.99\linewidth]{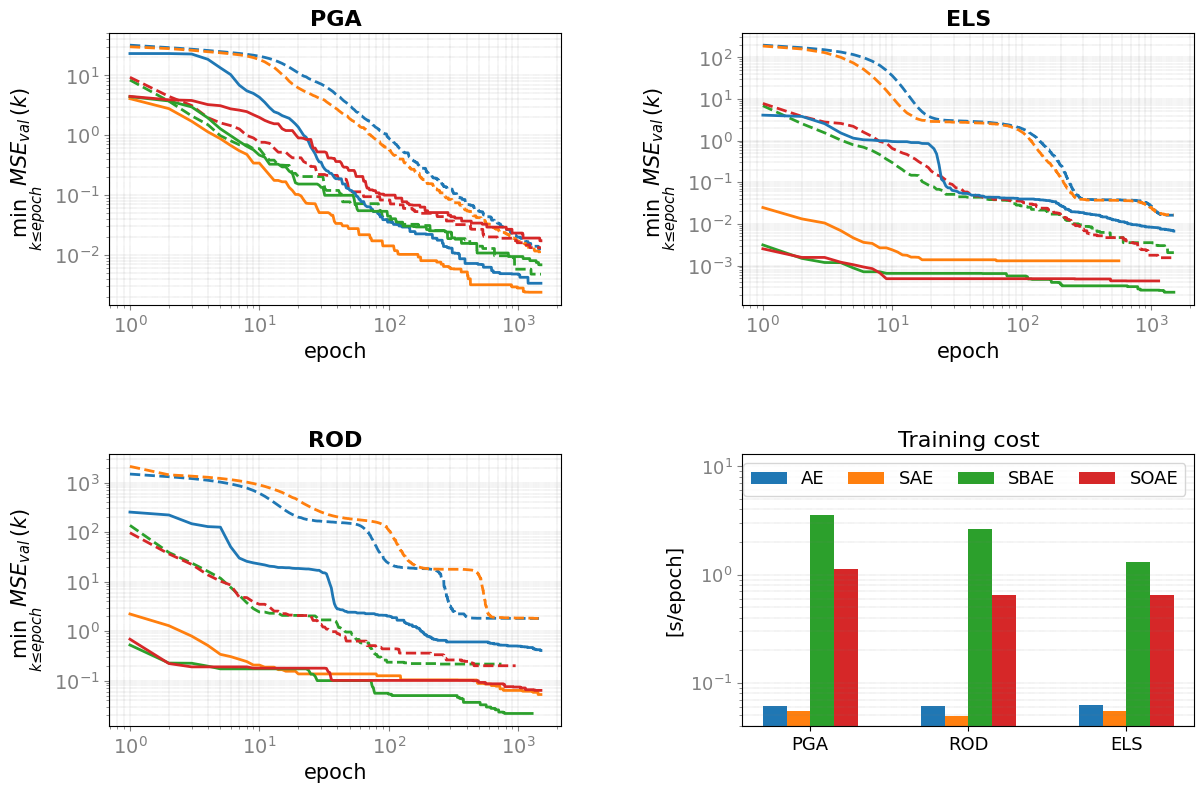}
    \caption{Visualization of the training dynamics, with $LeakyReLU_{\alpha,5/4}$, with $\alpha$ such that $\angle(\alpha)=3.0$.}
    \label{fig:comparison_sm1}
\end{figure}

\begin{figure}
    \centering
    \includegraphics[width=0.99\linewidth]{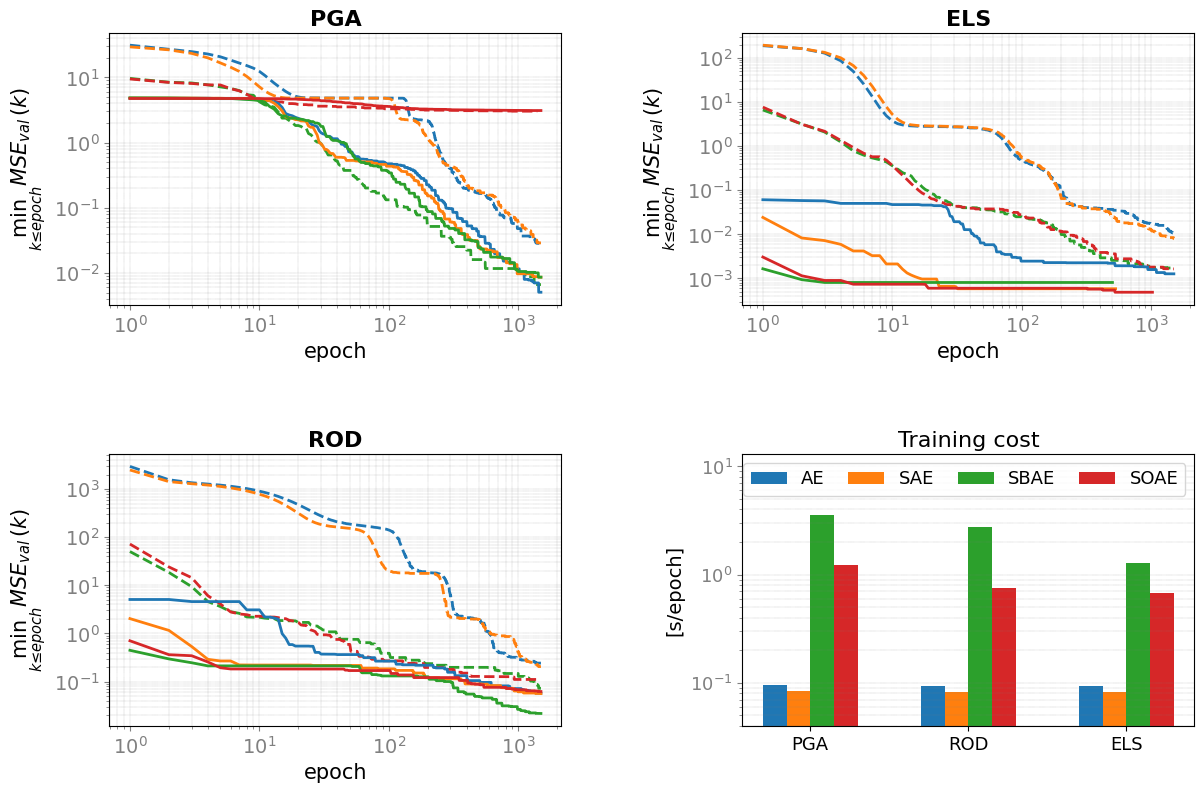}
    \caption{Visualization of the training dynamics, with $HypAct(\theta)$, with $\theta$ such that $\angle(\theta)=0.5$.}
    \label{fig:comparison_sm2}
\end{figure}

\begin{table}[h!]
\centering
\footnotesize{
\begin{tabular}{lp{2cm}p{2cm}S[table-format=1.2e-2]S[table-format=1.2e-2]}
\toprule
\bf {Activation} & \bf {Sharpness} & \bf {Model} & \bf {MSE} & \bf {MRE} \\
\midrule
\multirow{8}{*}{$\textnormal{HypAct}_{\theta}$} 
    & \multirow{4}{*}{$\angle(\theta) = 0.5$} 
        & \textsf{AE}   & 6.04e-03 & 1.16e-02 \\
    & & \textsf{SAE}  & 1.00e-02 & 1.56e-02 \\
    & & \textsf{SBAE} & 9.97e-03 & 1.60e-02 \\
    & & \textsf{SOAE} & 3.00e+00 & 2.93e-01 \\
\cline{2-5}
    & \multirow{4}{*}{$\angle(\theta) = 3.0$} 
        & \textsf{AE}   & 2.33e-03 & 7.06e-03 \\
    & & \textsf{SAE}  & 2.63e-03 & 6.90e-03 \\
    & & \textsf{SBAE} & 4.41e-03 & 1.07e-02 \\
    & & \textsf{SOAE} & 8.49e-03 & 1.48e-02 \\
\hline
\multirow{8}{*}{$\textnormal{LeakyReLU}_{\alpha,5/4}\;\;\;$} 
    & \multirow{4}{*}{$\angle(\alpha) = 0.5$} 
        & \textsf{AE}   & 1.56e-02 & 1.31e-02 \\
    & & \textsf{SAE}  & 6.14e-03 & 1.07e-02 \\
    & & \textsf{SBAE} & 9.01e-03 & 1.46e-02 \\
    & & \textsf{SOAE} & 2.54e+00 & 2.71e-01 \\
\cline{2-5}
    & \multirow{4}{*}{$\angle(\alpha) = 3.0$} 
        & \textsf{AE}   & 4.49e-03 & 9.60e-03 \\
    & & \textsf{SAE}  & 2.83e-03 & 8.14e-03 \\
    & & \textsf{SBAE} & 7.30e-03 & 1.40e-02 \\
    & & \textsf{SOAE} & 2.16e-02 & 2.33e-02 \\
\bottomrule
\end{tabular}
}
\vspace{6pt}
\caption{
{\em \textsf{PGA}} test case.} 
\label{tab:pga}
\end{table}

\begin{table}[h!]
\centering
\footnotesize{
\begin{tabular}{lp{2cm}p{2cm}S[table-format=1.2e-2]S[table-format=1.2e-2]}
\toprule
\bf {Activation} & \bf {Sharpness} & \bf {Model} & \bf {MSE} & \bf {MRE} \\
\midrule
\multirow{8}{*}{$\textnormal{HypAct}_{\theta}$} 
    & \multirow{4}{*}{$\angle(\theta) = 0.5$} 
        & \textsf{AE}   & 5.32e-03 & 1.09e-02 \\
    & & \textsf{SAE}  & 3.00e-03 & 7.98e-03 \\
    & & \textsf{SBAE} & 4.02e-03 & 9.05e-03 \\
    & & \textsf{SOAE} & 2.61e-03 & 7.35e-03 \\
\cline{2-5}
    & \multirow{4}{*}{$\angle(\theta) = 3.0$} 
        & \textsf{AE}   & 8.38e-03 & 1.27e-02 \\
    & & \textsf{SAE}  & 4.06e-03 & 9.55e-03 \\
    & & \textsf{SBAE} & 2.29e-03 & 6.88e-03 \\
    & & \textsf{SOAE} & 3.01e-03 & 7.34e-03 \\
\hline
\multirow{8}{*}{$\textnormal{LeakyReLU}_{\alpha,5/4}\;\;\;$} 
    & \multirow{4}{*}{$\angle(\alpha) = 0.5$} 
        & \textsf{AE}   & 3.25e-03 & 8.57e-03 \\
    & & \textsf{SAE}  & 3.80e-03 & 9.36e-03 \\
    & & \textsf{SBAE} & 7.42e-04 & 4.06e-03 \\
    & & \textsf{SOAE} & 3.03e-03 & 8.02e-03 \\
\cline{2-5}
    & \multirow{4}{*}{$\angle(\alpha) = 3.0$} 
        & \textsf{AE}   & 2.46e-02 & 1.84e-02 \\
    & & \textsf{SAE}  & 6.13e-03 & 1.15e-02 \\
    & & \textsf{SBAE} & 9.26e-04 & 4.36e-03 \\
    & & \textsf{SOAE} & 1.90e-03 & 6.36e-03 \\
\bottomrule
\end{tabular}
}
\vspace{6pt}
\caption{
{\em \textsf{ELS}} test case.}
\label{tab:els}
\end{table}

\clearpage
\vspace{0pt}
\vfill
\begin{figure}[H]
    \centering
    \includegraphics[width=0.99\linewidth]{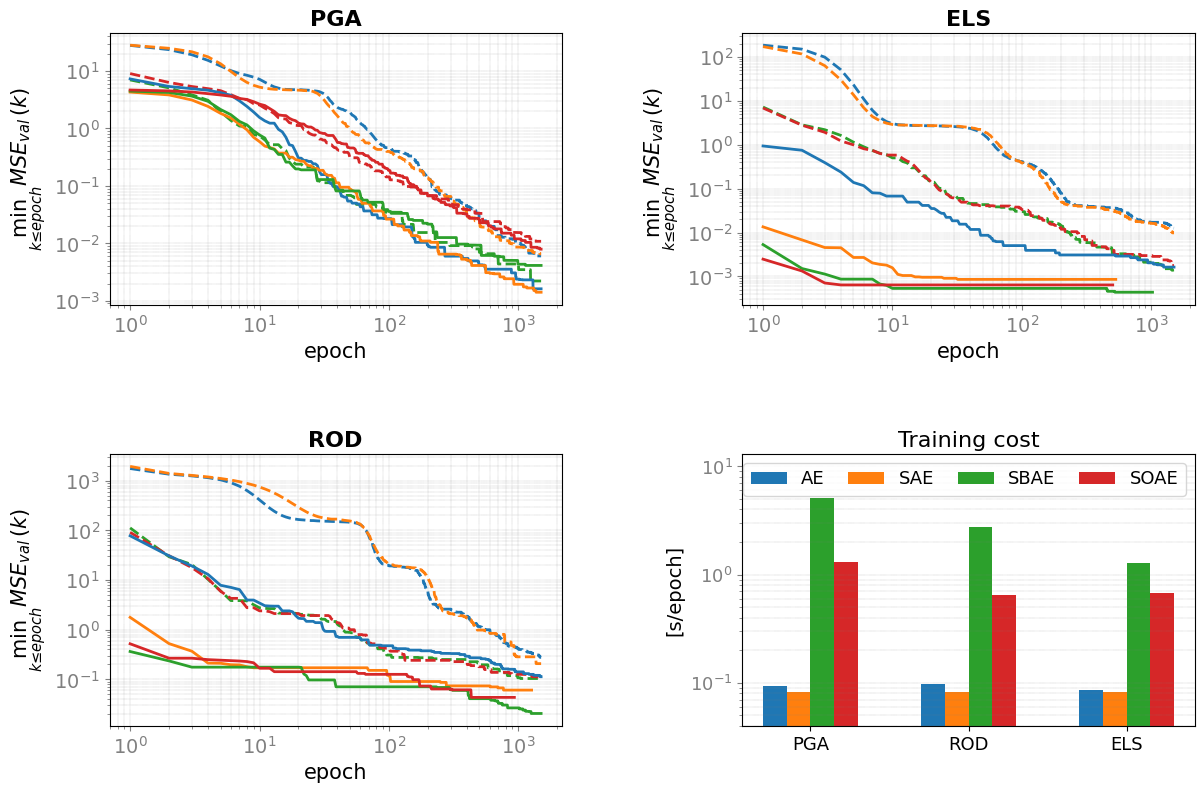}
    \caption{Visualization of the training dynamics, with $HypAct(\theta)$, with $\theta$ such that $\angle(\theta)=3.0$.}
    \label{fig:comparison_sm3}
\end{figure}
\vfill
\clearpage

\end{appendices}

\end{document}